\newcommand{\sst}{\scriptscriptstyle}
\newcommand{\bA}{\mathbb{A}}
\newcommand{\fA}{\mathfrak{A}}
\newcommand{\C}{\mathbb{C}}
\newcommand{\D}{\mathbb{D}}
\newcommand{\bG}{\mathbb{G}}
\newcommand{\cN}{\mathcal{N}}
\newcommand{\R}{\mathbb{R}}
\newcommand{\Z}{\mathbb{Z}}
\newcommand{\fk}{\mathfrak{k}}
\newcommand{\fsl}{\mathfrak{sl}}
\newcommand{\ru}{{}^{\sst r}\!}
\newcommand{\ufl}{\mathchoice{{}^\flat}{{}^\flat}{{}^\flat}{\flat}}
\newcommand{\ush}{\mathchoice{{}^\sharp}{{}^\sharp}{{}^\sharp}{\sharp}}
\newcommand{\flatru}{{}^{\sst \flat r}}
\newcommand{\sharpru}{{}^{\sst \sharp r}}
\newcommand{\ffru}{{}^{\sst \flat(\flat r)}}
\newcommand{\sfru}{{}^{\sst \sharp(\flat r)}}
\newcommand{\q}{{}_{\sst q}}
\newcommand{\orb}[1]{\mathbb{O}(#1)}
\newcommand{\orbvb}[1]{\Omega(#1)}
\newcommand{\cC}{\mathcal{C}}
\newcommand{\cD}{\mathcal{D}}
\newcommand{\cF}{\mathcal{F}}
\newcommand{\cG}{\mathcal{G}}
\newcommand{\cL}{\mathcal{L}}
\newcommand{\cM}{\mathcal{M}}
\newcommand{\cO}{\mathcal{O}}
\newcommand{\cP}{\mathcal{P}}
\newcommand{\cQ}{\mathcal{Q}}
\newcommand{\uZ}{\underline{Z}}
\newcommand{\cIC}{\mathcal{IC}}
\newcommand{\cMn}{\cM^\natural}
\newcommand{\cg}[1]{\cC_G(#1)}
\newcommand{\cgl}[2]{\cC_G(#1)_{\le #2}}
\newcommand{\qg}[1]{\cQ_G(#1)}
\newcommand{\Db}[1]{\cD_{\sst G}^{\sst\mathrm{b}}(#1)}
\newcommand{\Dm}[1]{\cD_{\sst G}^{\sst -}(#1)}
\newcommand{\Dp}[1]{\cD_{\sst G}^{\sst +}(#1)}
\newcommand{\Dl}[2]{\Db{#1}^{\sst\le #2}}
\newcommand{\Dg}[2]{\Db{#1}^{\sst\ge #2}}
\newcommand{\Dml}[2]{\Dm{#1}^{\sst\le #2}}
\newcommand{\Dpg}[2]{\Dp{#1}^{\sst\ge #2}}
\newcommand{\Dkl}[2]{\Db{#1}_{\sst\sqsubseteq #2}}
\newcommand{\Dkg}[2]{\Db{#1}_{\sst\sqsupseteq #2}}
\newcommand{\Dpkg}[2]{\Dp{#1}_{\sst\sqsupseteq #2}}
\newcommand{\red}{{\mathrm{red}}}
\newcommand{\hto}{\hookrightarrow}
\newcommand{\ssm}{\smallsetminus}
\newcommand{\la}{\langle}
\newcommand{\ra}{\rangle}
\DeclareMathOperator{\im}{im}
\DeclareMathOperator{\skdeg}{sk\;deg}
\DeclareMathOperator{\step}{step}
\DeclareMathOperator{\Hom}{Hom}
\DeclareMathOperator{\Ext}{Ext}
\DeclareMathOperator{\End}{End}
\DeclareMathOperator{\Spec}{Spec}
\newtheorem{thm}{Theorem}[section]
\newtheorem{lem}[thm]{Lemma}
\newtheorem{prop}[thm]{Proposition}
\newtheorem{cor}[thm]{Corollary}
\theoremstyle{definition}
\newtheorem{defn}[thm]{Definition}
\theoremstyle{remark}
\newtheorem{rmk}[thm]{Remark}
\numberwithin{equation}{section}
\title[On the quasi-hereditary property for staggered sheaves]{On the
quasi-hereditary property for\\ staggered sheaves}
\author{Pramod N.~Achar}
\address{Department of Mathematics, Louisiana State University, Baton
Rouge, LA \ 70803}
\email{pramod@math.lsu.edu}
\thanks{The author was partially supported by NSF Grant DMS-0500873.}
\begin{document}

\begin{abstract}
Let $G$ be an algebraic group over an algebraically closed field, acting on a variety $X$ with finitely many orbits.  \emph{Staggered sheaves} are certain complexes of $G$-equivariant coherent sheaves on $X$ that seem to possess many remarkable properties.  In this paper, we construct ``standard'' and ``costandard'' objects in the category of staggered sheaves, and we prove that that category has enough projectives and injectives.
\end{abstract}

\maketitle

\section{Introduction}
\label{sect:intro}

Let $X$ be a variety over an algebraically closed field $\Bbbk$, and let $G$ be a linear algebraic group over $\Bbbk$ acting on $X$ with finitely many orbits.  {\it Staggered sheaves}~\cite{a, at:bs} are the objects in the heart of certain $t$-structure on the bounded derived category $\Db X$ of $G$-equivariant coherent sheaves on $X$.  The category of staggered sheaves, denoted $\cM(X)$, enjoys a growing list of remarkable properties, analogous in many ways to properties of $\ell$-adic mixed perverse sheaves~\cite{at:bs, at:pd}:
\begin{itemize}
\item Every object has finite length.  Simple objects arise via an ``$\cIC$'' functor and are parametrized by irreducible vector bundles on $G$-orbits.
\item There is a well-behaved notion of ``purity'' in $\Db X$, and every simple staggered sheaf is pure.
\item Every pure object in $\Db X$ is semisimple, {\it i.e.}, a direct sum of shifts of simple staggered sheaves.
\end{itemize}
In this paper, we add to this list as follows. First, we prove that
$\cM(X)$ is \emph{quasi-hereditary}, meaning that every simple object is a
quotient of some ``standard'' object and is contained in some
``costandard'' object.  (See Section~\ref{sect:abcat} for definitions.) 
This answers a question I was asked by David Vogan.  Second, we prove that
$\cM(X)$ has enough projectives and injectives.  These are analogues of results on perverse sheaves due to Mirollo--Vilonen~\cite{mv}.

The paper is organized as follows.  Section~\ref{sect:abcat} contains some generalities on quasi-hereditary abelian categories, and Section~\ref{sect:review} is a review of relevant facts about staggered sheaves.  In Section~\ref{sect:pushfd}, we prove that the functor of restriction to an open subscheme has both left and right adjoints in $\cM(X)$.  (In general, there are no such adjoints in $\Db X$, of course.)  We use those adjoints to construct standard and costandard objects in Section~\ref{sect:objects}.

The next two sections contain useful auxiliary results.  In Section~\ref{sect:full}, we show that the subcategory of staggered sheaves supported on some closed subscheme is Serre.  (A corollary is that staggered sheaves do not ``see'' nilpotent thickenings of schemes.)  Next, Section~\ref{sect:struc} gives an explicit description of the structure of standard and costandard objects.  We obtain several $\Ext^1$-vanishing results as corollaries.  The main theorem, on projectives and injectives in $\cM(X)$, is proved in Section~\ref{sect:extfin}.  Finally, Section~\ref{sect:exam} presents a brief example.

\subsection*{Acknowledgements}
I am grateful to Roman Bezrukavnikov and David Vogan for suggestions that led to this work, and to David Treumann for numerous fruitful and inspiring conversations.

\section{Preliminaries on Quasi-hereditary Categories}
\label{sect:abcat}

Let $\fA$ be a $\Bbbk$-linear abelian category.  Assume that $\fA$ has a
small skeleton; {\it i.e.}, that the class $S$ of isomorphism classes of
simple objects forms a set.  For each $s \in S$, fix a representative
$L(s)$.  We also assume that $\fA$ is of \emph{finite type}, meaning
that every object has finite length, and we write $[X:L(s)]$ for the multiplicity of $L(s)$ in any composition series for $X$.  We also assume
that
\[
\Hom(L(s),L(s)) \simeq \Bbbk \qquad\text{for all $s \in S$.}
\]
These assumptions imply that $\fA$ is also \emph{Hom-finite}: the
vector space $\Hom(X,Y)$ is finite-dimensional for any two objects $X, Y
\in \fA$.

Recall that if a simple object $L(s)$ admits a projective cover $(P,\phi)$
(where $\phi: P \to L(s)$ is a surjective morphism), it is unique up to
isomorphism, but in general not canonically so.  The same holds for
injective hulls $(I,\psi)$.  (See Lemma~\ref{lem:proj-iso} below, however.)

The next two elementary lemmas will be useful in the sequel.  Their proofs are routine and will be omitted.

\begin{lem}\label{lem:proj-mult}
Let $X$ be an object of $\fA$.  If $L(s)$ has a projective cover
$(P,\phi)$, then $[X : L(s)] = \dim \Hom(P,X)$.  If $L(s)$ has an
injective hull $(I,\psi)$, then $[X : L(s)] = \dim \Hom(X,I)$.\qed
\end{lem}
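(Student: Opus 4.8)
The plan is to show that both sides of each claimed equality are additive functions on short exact sequences and that they agree on simple objects; since every object of $\fA$ has finite length, this forces them to agree for all $X$.

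First I would treat the projective statement. Because $P$ is projective, the functor $\Hom(P,-)$ is exact, so for any short exact sequence $0 \to X' \to X \to X'' \to 0$ the induced sequence of (finite-dimensional) vector spaces $0 \to \Hom(P,X') \to \Hom(P,X) \to \Hom(P,X'') \to 0$ is exact, whence $\dim\Hom(P,X) = \dim\Hom(P,X') + \dim\Hom(P,X'')$. On the other hand, $[X:L(s)]$ is additive on short exact sequences by the Jordan--H\"older theorem, which applies since $\fA$ is of finite type. Both functions therefore depend only on the class of $X$ in the Grothendieck group, so it suffices to verify the equality when $X = L(t)$ is simple.

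The key computation is that $\dim\Hom(P,L(t)) = \delta_{st}$. Writing $\phi\colon P \to L(s)$ for the structure map of the projective cover, $\ker\phi$ is a superfluous subobject and so coincides with the radical $\mathrm{rad}(P)$ (the largest superfluous subobject, equivalently the intersection of the maximal subobjects), whence the head $P/\mathrm{rad}(P)$ is isomorphic to $L(s)$. Now any nonzero morphism $P \to L(t)$ is surjective with maximal kernel, and every maximal subobject contains $\mathrm{rad}(P)$; thus every such morphism factors through $P/\mathrm{rad}(P) \simeq L(s)$, giving $\Hom(P,L(t)) \simeq \Hom(L(s),L(t))$. By simplicity this space is $0$ unless $t = s$, and when $t = s$ it equals $\Hom(L(s),L(s)) \simeq \Bbbk$ by hypothesis. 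Hence $\dim\Hom(P,L(t)) = \delta_{st} = [L(t):L(s)]$, and additivity finishes the first claim. The injective statement follows by the dual argument: $\Hom(-,I)$ is exact because $I$ is injective, the injective hull $(I,\psi)$ has socle $L(s)$, every nonzero morphism $L(t) \to I$ is injective with image inside the socle, and one computes $\dim\Hom(L(t),I) = \delta_{st}$ in the same fashion.

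I expect the only genuine subtlety to be the identification of the head of $P$ (resp.\ the socle of $I$) with $L(s)$, that is, translating the defining essentiality of the projective cover (resp.\ injective hull) into the statement that morphisms from $P$ into a simple object (resp.\ into $I$ from a simple object) detect exactly the top (resp.\ bottom) composition factor. Everything else is formal bookkeeping with additive functions on finite-length objects.
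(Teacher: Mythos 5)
Your proof is correct, and it is the standard argument the paper has in mind when it declares the proofs of Lemmas~\ref{lem:proj-mult} and~\ref{lem:proj-iso} ``routine'' and omits them: reduce by additivity (exactness of $\Hom(P,-)$, Jordan--H\"older) to the computation $\dim\Hom(P,L(t))=\delta_{st}$, which you justify properly via the superfluousness of $\ker\phi$ and the hypothesis $\Hom(L(s),L(s))\simeq\Bbbk$. There is nothing to compare against and no gap to report.
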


\begin{lem}\label{lem:proj-iso}
Assume that for some (and hence any) projective cover $(P,\phi)$ of
$L(s)$, we have $[P: L(s)] = 1$.  Then any two projective covers
of $L(s)$ are canonically isomorphic.\qed
\end{lem}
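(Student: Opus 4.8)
The plan is to leverage Lemma~\ref{lem:proj-mult} to convert the hypothesis on composition multiplicities into a statement about the dimensions of the relevant $\Hom$-spaces, after which canonicity will follow almost formally. Let $(P,\phi)$ and $(P',\phi')$ be two projective covers of $L(s)$. By the ``some and hence any'' clause of the hypothesis, both satisfy $[P:L(s)] = [P':L(s)] = 1$. Applying Lemma~\ref{lem:proj-mult} (in its projective-cover form, with $P$ the cover of $L(s)$) to the objects $X = P$ and $X = P'$ yields $\dim\End(P) = [P:L(s)] = 1$ and $\dim\Hom(P,P') = [P':L(s)] = 1$. In particular $\End(P) = \Bbbk\cdot\mathrm{id}_P$, and symmetrically $\End(P') = \Bbbk\cdot\mathrm{id}_{P'}$.

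Next I would produce the isomorphism by the usual lifting argument. Since $P$ is projective and $\phi'$ is surjective, $\phi$ lifts to a map $f\colon P\to P'$ with $\phi' f = \phi$; symmetrically there is $g\colon P'\to P$ with $\phi g = \phi'$. The composite $gf$ lies in $\End(P) = \Bbbk\cdot\mathrm{id}_P$, so $gf = \lambda\,\mathrm{id}_P$ for some scalar $\lambda$. Composing with $\phi$ gives $\lambda\phi = \phi(gf) = \phi$, and since $\phi\neq 0$ we conclude $\lambda = 1$, i.e.\ $gf = \mathrm{id}_P$. The same reasoning applied to $fg\in\End(P')$ gives $fg = \mathrm{id}_{P'}$, so $f$ is an isomorphism with inverse $g$ satisfying $\phi' f = \phi$.

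Finally, for canonicity I would check that $f$ is the \emph{unique} morphism compatible with the covering maps. As $\phi' f = \phi \neq 0$, the map $f$ is nonzero, and since $\dim\Hom(P,P') = 1$ every morphism $P\to P'$ is a scalar multiple of $f$. If $f_1$ is any morphism with $\phi' f_1 = \phi$, write $f_1 = \mu f$; then $\phi = \phi' f_1 = \mu\,\phi' f = \mu\phi$, whence $\mu = 1$ and $f_1 = f$. Thus there is exactly one morphism $P\to P'$ lifting the covers, and it is the isomorphism found above, giving the canonical identification.

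I expect no genuine obstacle: the whole argument rests on the single observation that the multiplicity hypothesis, through Lemma~\ref{lem:proj-mult}, forces the relevant $\Hom$-spaces to be one-dimensional. The only points needing care are bookkeeping ones — confirming that Lemma~\ref{lem:proj-mult} is being applied with $P$ genuinely \emph{the} projective cover (not merely some projective surjecting onto $L(s)$), and using $\phi\neq 0$ to pin each scalar to $1$, so that the isomorphism is canonical rather than merely existent.
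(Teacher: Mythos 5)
Your proof is correct. The paper omits the proof of this lemma as ``routine,'' and your argument --- using Lemma~\ref{lem:proj-mult} to get $\dim\End(P)=\dim\Hom(P,P')=1$ from the multiplicity hypothesis, then pinning the scalars via $\phi\neq 0$ to obtain a unique isomorphism $f$ with $\phi'f=\phi$ --- is exactly the intended routine argument.
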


Assume henceforth that the set $S$ is equipped with a fixed order
$\preceq$. For $s \in
S$, let $\fA_{\preceq s}$ (resp.~$\fA_{\prec s}$) denote the Serre
subcategory generated by $\{L(t) \mid t \preceq s\}$ (resp.~$\{L(t) \mid t
\prec s\}$).

The following definitions are taken from~\cite{bez:qes}.  Related but
slightly different notions appear in~\cite{cps:fda} and~\cite{bgs}.

\begin{defn}
A \emph{standard cover} of $L(s)$ is a projective cover $(M(s),\phi(s))$
of $L(s)$ within the category $\fA_{\preceq s}$ with the property that
$[M(s) : L(s)] = 1$.  Similarly, a \emph{costandard hull} of
$L(s)$ is an injective hull $(N(s),\psi(s))$ within $\fA_{\preceq s}$ such
that $[N(s) : L(s)] = 1$.

$\fA$ is said to be \emph{quasi-hereditary} if every simple object has a
standard cover and a costandard hull.
\end{defn}

By Lemma~\ref{lem:proj-iso}, standard covers and costandard hulls are
unique (when they exist) up to canonical isomorphism.

\begin{lem}\label{lem:max-s}
The following conditions on an object $X \in \fA$ are equivalent:
\begin{enumerate}
\item $X \in \fA_{\preceq s}$.
\item $\Hom(M(t),X) = 0$ for all $t \succ s$.
\item $\Hom(X,N(t)) = 0$ for all $t \succ s$.
\end{enumerate}
\end{lem}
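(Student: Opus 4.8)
The plan is to deduce everything from Lemma~\ref{lem:proj-mult}, but applied inside the Serre subcategories $\fA_{\preceq t}$ rather than in $\fA$ itself. Each $\fA_{\preceq t}$ is a full, $\Bbbk$-linear, abelian subcategory which is again of finite type and Hom-finite, with simple objects exactly the $L(u)$ for $u \preceq t$ (each satisfying $\Hom(L(u),L(u))\simeq\Bbbk$); and since $\fA$ is quasi-hereditary, $M(t)$ is a genuine projective cover and $N(t)$ a genuine injective hull of $L(t)$ in $\fA_{\preceq t}$. As $\fA_{\preceq t}$ is full, its $\Hom$-groups agree with those of $\fA$, so Lemma~\ref{lem:proj-mult} yields, for every $Y \in \fA_{\preceq t}$,
\[
\dim\Hom(M(t),Y) = [Y:L(t)] = \dim\Hom(Y,N(t)).
\]
I will prove $(1)\Leftrightarrow(2)$ and obtain $(1)\Leftrightarrow(3)$ by the formally dual argument (reverse all arrows and interchange $M(t)$ with $N(t)$, using the right-hand identity above in place of the left-hand one). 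I will use repeatedly that $t \succ s$ implies $\fA_{\preceq s} \subseteq \fA_{\preceq t}$.

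For $(1)\Rightarrow(2)$, assume $X \in \fA_{\preceq s}$ and fix $t \succ s$. Then also $X \in \fA_{\preceq t}$, so the displayed identity gives $\dim\Hom(M(t),X) = [X:L(t)]$. Every composition factor of $X$ is an $L(u)$ with $u \preceq s$, while $t \succ s$ forces $t \not\preceq s$; hence $[X:L(t)] = 0$ and $\Hom(M(t),X) = 0$.

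For $(2)\Rightarrow(1)$ I argue contrapositively. Suppose $X \notin \fA_{\preceq s}$, and let $L(t)$ be a composition factor of $X$ with $t$ maximal (under $\preceq$) among all composition factors of $X$. I claim $t \succ s$ and $X \in \fA_{\preceq t}$. Granting this, the displayed identity gives $\dim\Hom(M(t),X) = [X:L(t)] \ge 1$, so $\Hom(M(t),X) \ne 0$ for a $t \succ s$, contradicting $(2)$. The equivalence $(1)\Leftrightarrow(3)$ runs identically, with the simple socle $L(t)$ of $N(t)$ and the identity $\dim\Hom(X,N(t)) = [X:L(t)]$ replacing their projective analogues.

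The one genuine point, and the step I expect to be the crux, is the verification that the maximal composition factor $t$ chosen above satisfies both $t \succ s$ and $X \in \fA_{\preceq t}$; this is precisely where the ordering of $S$ enters. Since $\preceq$ is total, a $\preceq$-maximal composition factor dominates all the others, so every factor of $X$ is $\preceq t$ and therefore $X \in \fA_{\preceq t}$; and as $X \notin \fA_{\preceq s}$ some factor fails to be $\preceq s$, whence the largest factor $t$ in particular has $t \not\preceq s$, i.e.\ $t \succ s$. (If one only assumes a partial order, the characterization should be read with $t \not\preceq s$ in place of $t \succ s$, selecting $t$ maximal among the factors that are not $\preceq s$; the rest of the argument is unchanged.)
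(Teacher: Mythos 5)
Your proof is correct, and it takes a genuinely different (if closely related) route from the paper's. The paper proves the contrapositive of $(1)\Rightarrow(2)$ by induction on the length of $X$: it picks a simple subobject $X'$, and in the case $X' \in \fA_{\preceq s}$ it uses $\Hom(M(t),X') = \Ext^1(M(t),X') = 0$ --- i.e.\ the projectivity of $M(t)$ in the Serre subcategory $\fA_{\preceq t}$ --- to transport a nonzero map $M(t) \to X/X'$ back up to $X$. You instead absorb that induction entirely into Lemma~\ref{lem:proj-mult}, applied inside $\fA_{\preceq t}$: since $M(t)$ is by definition a projective cover of $L(t)$ in $\fA_{\preceq t}$, and that full Serre subcategory has the same $\Hom$-groups as $\fA$, you get $\dim\Hom(M(t),Y) = [Y:L(t)]$ for every $Y \in \fA_{\preceq t}$, and choosing $t$ to be the $\preceq$-largest composition factor of $X$ finishes everything in one stroke. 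Both arguments ultimately rest on the same fact (projectivity of $M(t)$ in $\fA_{\preceq t}$, injectivity of $N(t)$ for the dual half), but yours trades the explicit length induction for the multiplicity formula and in exchange yields slightly more, namely the identification $\dim\Hom(M(t),X) = [X:L(t)]$ for that maximal $t$. One shared caveat: your step ``$t \not\preceq s$ hence $t \succ s$'' and the paper's ``$X' \simeq L(t)$ for some $t \succ s$'' both require $\preceq$ to be total (or the conditions to be read with $t \not\preceq s$); you flag this explicitly, which is reasonable since Section~\ref{sect:abcat} says only ``order'' while the order actually used in Theorem~\ref{thm:main} is total.
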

\begin{proof}
If $X \in \fA_{\preceq s}$, it is clear that conditions~(2) and~(3) hold. 
Now, suppose $X \notin \fA_{\preceq s}$.  We will prove by induction on
the length of $X$ that condition~(2) also fails; the proof that~(3) fails
is similar. Let $X'$ be a simple subobject of $X$, and let $X'' = X/X'$. 
Then either $X' \notin \fA_{\preceq s}$ or $X'' \notin \fA_{\preceq s}$. 
If $X' \notin \fA_{\preceq s}$, then $X' \simeq L(t)$ for some $t \succ
s$, so there is clearly a nonzero morphism $M(t) \to X'$.  Thus,
$\Hom(M(t),X) \ne 0$ as well.  On the other hand, if $X' \in \fA_{\preceq
s}$ but $X'' \notin \fA_{\preceq s}$, then, by assumption, there is some
$t \succ s$ such that $\Hom(M(t),X'') \ne 0$.  We also have $\Hom(M(t),X')
= \Ext^1(M(t),X') = 0$, so the natural morphism $\Hom(M(t),X) \to
\Hom(M(t),X'')$ is an isomorphism.  In particular, $\Hom(M(t),X) \ne 0$,
as desired.
\end{proof}

Assume $\fA$ is quasi-hereditary.  For any object $X \in \fA$ and any $s
\in S$, we define
\[
\la X:M(s) \ra = \dim \Hom(X,N(s))
\qquad\text{and}\qquad
\la X:N(s) \ra = \dim \Hom(M(s),X).
\]
Note that if $X$ is a projective cover $P(t)$ of a simple object $L(t)$,
Lemma~\ref{lem:proj-mult} gives us an alternate interpretation of $\dim
\Hom(X,N(s))$.  We see then that
\begin{equation}\label{eqn:recip}
 \la P(t):M(s) \ra = [ N(s):L(t) ].
\end{equation}
This is, of course, the famous ``Brauer--Humphreys reciprocity'' formula for
highest-weight categories~\cite{cps:fda}.  In such a category, the
projective cover of a simple object admits a \emph{standard filtration},
{\it i.e.}, a filtration whose subquotients are standard objects, and the
number $\la P(t):M(s) \ra$ is the precisely the multiplicity with which
$M(s)$ occurs in any standard filtration of $P(t)$.  (This follows from the
fact that $\Hom(M(t), N(s)) = 0$ for $s \ne t$.)

It is not true that projectives in an arbitrary quasi-hereditary
category necessarily admit standard filtrations, and the numbers $\la
P(t):M(s) \ra$ cannot always be interpreted as multiplicities. 
Nevertheless, a weak form of these ideas holds in great generality: the
proposition below tells us that for any object $X \in \fA$, the numbers
$\la X:M(s) \ra$ give information about the subquotients of a certain
canonical filtration of $X$.

Given a finite-dimensional $\Bbbk$-vector space $V$, consider the object
$V \otimes M(s)$.  Let us say that a quotient $Y$ of $V \otimes M(s)$ is
\emph{essential} if $[Y:L(s)] = \dim V$.  Equivalently, $Y$ is an
essential quotient if the kernel of the morphism $V \otimes M(s) \to Y$
contains no subobject isomorphic to $M(s)$.  Note that if $\Hom(X,N(s))
\ne 0$, it must be the case that $[X:L(s)] \ne 0$, so $\Hom(X,N(s))$ must
vanish for all but finitely many $s$.

\begin{prop}\label{prop:std-filt}
Assume $\fA$ is quasi-hereditary.  Given $X \in \fA$, let $s_1 \prec s_2
\prec \cdots \prec s_k$ be the elements of $S$ such that $\Hom(X,N(s)) \ne
0$.  There is a canonical decreasing filtration
\[
X = X_1 \supset X_2 \supset \cdots \supset X_k \supset X_{k+1} = 0
\]
such that $X_i/X_{i+1}$ is an essential quotient of $\Hom(X,N(s_i))^*
\otimes M(s_i)$.
\end{prop}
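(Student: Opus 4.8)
My plan is to construct the filtration from the bottom upward, by induction on the length of $X$. I enumerate the relevant weights (those $s$ with $\Hom(X,N(s))\ne 0$) as $s_1\prec\cdots\prec s_k$ compatibly with $\preceq$, that is, as a linear extension, so that $s_k$ is $\preceq$-maximal among them. At each stage I peel off $s_k$ by setting $X_k$ equal to the trace of $M(s_k)$ in $X$, and then recurse on $X/X_k$, which I will show has relevant weights exactly $s_1,\dots,s_{k-1}$ with unchanged multiplicities and strictly smaller length.

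The computations I rely on all come from Lemma~\ref{lem:proj-mult} applied inside $\fA_{\preceq s}$, where $M(s)$ is a projective cover and $N(s)$ an injective hull of $L(s)$. Any morphism $X\to N(s)$ has image in $\fA_{\preceq s}$, so it factors through the largest quotient $\bar X$ of $X$ lying in $\fA_{\preceq s}$, whence $\dim\Hom(X,N(s))=[\bar X:L(s)]$. Dually, any $M(s)\to X$ factors through the largest subobject $\underline X$ of $X$ lying in $\fA_{\preceq s}$, so $\dim\Hom(M(s),X)=[\underline X:L(s)]$. A first consequence is that every simple quotient of $X$ is relevant, so a nonzero $X$ always admits a relevant weight and the induction can start.

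Now let $X_k$ be the image of the evaluation map $\Hom(M(s_k),X)\otimes M(s_k)\to X$. As a quotient of copies of $M(s_k)$ it lies in $\fA_{\preceq s_k}$, and every map $M(s_k)\to X$ factors through it, so $\Hom(M(s_k),X)\cong\Hom(M(s_k),X_k)$ and Lemma~\ref{lem:proj-mult} gives $[X_k:L(s_k)]=\dim\Hom(M(s_k),X)$; thus $X_k$ is an essential quotient of $\Hom(M(s_k),X)\otimes M(s_k)$. To identify the source with $\Hom(X,N(s_k))^*\otimes M(s_k)$ I would use the composition pairing $\Hom(X,N(s_k))\otimes\Hom(M(s_k),X)\to\Hom(M(s_k),N(s_k))\cong\Bbbk$, the last step being the basic fact $\Hom(M(t),N(s))=\delta_{st}\Bbbk$. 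The crux---and the step I expect to be the main obstacle---is to prove this pairing is perfect, equivalently that $\dim\Hom(M(s_k),X)=\dim\Hom(X,N(s_k))$ with common value $[X:L(s_k)]$. For this I would first check that a $\preceq$-maximal relevant weight is actually a maximal composition factor of $X$: a factor $v\succ s_k$ would have a maximal composition factor above it, which by the computation above is again relevant and strictly larger than $s_k$, a contradiction. Granting this, the factors of $X$ not $\preceq s_k$ are incomparable to $s_k$, and the vanishing of $\Ext^1$ between incomparable simples shows that no copy of $L(s_k)$ is trapped above or below them, so $[\bar X:L(s_k)]=[\underline X:L(s_k)]=[X:L(s_k)]$. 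This is the one point where I must invoke the finer homological structure of $\fA$ rather than the formal lemmas alone.

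To close the induction I apply $\Hom(-,N(s))$ to $0\to X_k\to X\to X/X_k\to 0$. Since $X_k$ is a quotient of copies of $M(s_k)$ and $\Hom(M(s_k),N(s))=\delta_{s,s_k}\Bbbk$, one finds $\Hom(X_k,N(s))=0$ for $s\ne s_k$, hence $\Hom(X/X_k,N(s))\cong\Hom(X,N(s))$ there; and the equality $[X_k:L(s_k)]=[X:L(s_k)]$ forces $\Hom(X/X_k,N(s_k))=0$. Therefore $X/X_k$ has relevant weights exactly $s_1,\dots,s_{k-1}$ with unchanged multiplicities and smaller length, so by induction it carries a filtration with the desired essential subquotients. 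Pulling this back along $X\to X/X_k$ and appending $X_k$ as the bottom term yields $X=X_1\supset\cdots\supset X_k\supset 0$ with $X_i/X_{i+1}$ an essential quotient of $\Hom(X,N(s_i))^*\otimes M(s_i)$, and canonicity of each $X_i$ follows from that of the trace and of the pullback.
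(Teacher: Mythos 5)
Your construction coincides with the paper's: $X_k$ is the image of the evaluation map $\Hom(M(s_k),X)\otimes M(s_k)\to X$, it is shown to be an essential quotient, and one recurses on $X/X_k$ after checking that its relevant weights are exactly $s_1,\dots,s_{k-1}$ with unchanged multiplicities. However, the step you yourself single out as the crux contains a genuine gap: a pairing between two vector spaces of equal finite dimension need not be perfect, so proving $\dim\Hom(M(s_k),X)=\dim\Hom(X,N(s_k))$ does not, by itself, produce the canonical isomorphism $\Hom(M(s_k),X)\simeq\Hom(X,N(s_k))^*$ that the statement of the proposition requires. What is actually needed, and what the paper uses, is nondegeneracy of the composition pairing, which comes straight from the fact that $M(s_k)$ is a projective cover and $N(s_k)$ an injective hull of $L(s_k)$ inside $\fA_{\preceq s_k}$, together with $X\in\fA_{\preceq s_k}$: any nonzero $f\colon M(s_k)\to X$ has image admitting $L(s_k)$ as a quotient (since $M(s_k)$ has a unique maximal subobject), so by injectivity of $N(s_k)$ some $g\colon X\to N(s_k)$ satisfies $g\circ f\ne 0$, and dually. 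Once nondegeneracy is established, the dimension count is superfluous.

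A second, smaller problem is your route to $X\in\fA_{\preceq s_k}$ and the multiplicity identity: you invoke ``vanishing of $\Ext^1$ between incomparable simples,'' which is not available for an abstract quasi-hereditary $\fA$ in the sense of Section~\ref{sect:abcat} (it is a special feature of $\cM(X)$ proved only much later, and this proposition is meant to hold in the abstract setting). Fortunately the detour is unnecessary: the order is total here (the enumeration $s_1\prec\cdots\prec s_k$ presupposes it), and Lemma~\ref{lem:max-s} gives $X\in\fA_{\preceq s_k}$ directly from the maximality of $s_k$ among the relevant weights; then your $\bar X$ and $\underline X$ both equal $X$, and Lemma~\ref{lem:proj-mult} applied inside $\fA_{\preceq s_k}$ yields $\dim\Hom(M(s_k),X)=\dim\Hom(X,N(s_k))=[X:L(s_k)]$. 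The remainder of your recursion, including the computation of $\Hom(X/X_k,N(s))$ via $\Hom(M(s_k),N(s))=\delta_{s,s_k}\Bbbk$, is sound and parallels the paper's argument.
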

\begin{proof}
From Lemma~\ref{lem:max-s}, we see that $X \in \fA_{\preceq s_k}$, and
that $\Hom(M(t),X) = 0$ for $t \succ s_k$ but $\Hom(M(s_k),X) \ne 0$.
Consider the canonical morphism
\[
e: \Hom(M(s_k),X) \otimes M(s_k) \to X.
\]
Let $X_k$ denote the image of this morphism, and let $X' = X/X_k$.  Note
that $X_k$ is an essential quotient of $\Hom(M(s_k),X) \otimes M(s_k)$:
otherwise, there would be some nonzero $f \in \Hom(M(s_k),X)$ with $\Bbbk
f \otimes M(s_k) \subset \ker e$, but that is absurd:
\[
 e(\Bbbk f \otimes M(s_k)) = \im f.
\]
Next, consider the exact sequence
\[
0 \to \Hom(X_k,N(s)) \to \Hom(\Hom(M(s_k),X) \otimes M(s_k), N(s)) \to
\Hom(\ker e, N(s)).
\]
When $s \prec s_k$, the middle term vanishes, and therefore the first term
does as well.  When $s = s_k$, the last term vanishes, so the first two
are isomorphic to one another.  Note that since $M(s_k)$ is a projective
object in $\fA_{\preceq s_k}$, and
$N(s_k)$ is injective, there is a nondegenerate pairing
\[
 \Hom(M(s_k),X) \otimes \Hom(X,N(s_k)) \to \Hom(M(s_k),N(s_k)) \simeq
\Bbbk.
\]
Thus, we have a sequence of isomorphisms
\begin{multline*}
\Hom(\Hom(M(s_k),X), \Hom(M(s_k),N(s_k))) \simeq \Hom(\Hom(M(s_k),X),
\Bbbk) \\
\simeq \Hom(M(s_k),X)^* \simeq \Hom(X,N(s_k)).
\end{multline*}
Next, consider the sequence
\[
0 \to \Hom(X',N(s)) \to \Hom(X,N(s)) \to \Hom(X_k,N(s)).
\]
Because $\fA_{\preceq s_k}$ is a Serre subcategory, both $X_k$ and $X'$
belong to it.  Thus, all three terms above vanish when $s \succ s_k$.
We saw above that the last term vanishes when $s \prec s_k$, and that the
map $\Hom(X,N(s)) \to \Hom(X_k,N(s))$ is an isomorphism when $s = s_k$. 
Combining these observations, we find that
\[
 \Hom(X',N(s)) \simeq
\begin{cases}
 \Hom(X,N(s)) & \text{if $s \ne s_k$,} \\
0 & \text{if $s = s_k$.}
\end{cases}
\]
We have shown that $X_k$ is an essential quotient of $\Hom(X,N(s_k))^*
\otimes M(s_k)$, so the result follows by induction.
\end{proof}

\section{Review of Staggered Sheaves}
\label{sect:review}

In this section, we fix notation and briefly review relevant facts about
staggered sheaves.  Let $X$ denote a (not necessarily reduced) scheme of
finite type over $\Bbbk$, and let $G$ denote a linear algebraic group over
$\Bbbk$, acting on $X$ with finitely many orbits.  Here, and throughout
the paper, an \emph{orbit} will mean a reduced, locally closed
$G$-invariant subscheme containing no smaller nonempty $G$-invariant
subscheme.  Let $\cg X$ denote the category of $G$-equivariant coherent
sheaves on $X$, and let $\Db X$ denote its bounded derived category.  We assume throughout that $\cg X$ has enough locally free objects.

Let $\orb X$ denote the set of $G$-orbits on $X$, and let
$\orbvb X$ denote the set of isomorphism classes of pairs $(C,\cL)$, where
$C \in \orb X$ and $\cL \in \cg C$ is an irreducible $G$-equivariant vector
bundle on $C$.  The category of staggered sheaves $\cM(X)$ depends on two
choices: a \emph{perversity}, which is simply a function $r: \orb X \to
\Z$, and an \emph{$s$-structure}, which is a certain kind of increasing
filtration of $\cg X$ (see~\cite{a}). We will not review the rather lengthy
and complicated definition of an $s$-structure here. Instead, we recall
only that an $s$-structure allows us to assign to each pair $(C,\cL) \in
\orbvb X$ a certain integer, denoted $\step \cL$.

We regard both the perversity and the $s$-structure as fixed, once and for
all.  Moreover, we assume that the $s$-structure is ``recessed'' and
``split,'' so that the results of~\cite{at:bs, at:pd} are available.  In particular, the results of~\cite[Section~8]{at:bs} allow us to define $\cM(X)$ with no assumption on $r$.  (In contrast, the original construction in~\cite{a} required $r$ to obey stringent inequalities.)  For examples of $s$-structures, see~\cite{as:flag, t}.

Staggered sheaves on a single orbit $C \subset X$ are easy to describe. 
Given an irreducible vector bundle $\cL \in \cg C$, let $d_\cL = \step \cL
- r(C)$.  The \emph{staggered $t$-structure} on $\Db C$, denoted $(\ru\Dl
C0,\ru\Dg C0)$, is the unique $t$-structure on $\Db C$ whose heart contains
all objects of the form $\cL[d_\cL]$.  By definition, $\cM(C) = \ru\Dl C0
\cap \ru\Dg C0$. 

Next, let $\Dm C$ denote the bounded-above derived category of $\cg C$, and
let $\ru\Dml C0$ denote the full subcategory consisting of objects $\cF$
such that $h^i(\cF)[-i] \in \ru\Dl C0$ for all $i$.  Let $i_C: \overline C
\hto X$ denote the inclusion of the closure of $C$.  The \emph{staggered
$t$-structure} $(\ru\Dl X0, \ru\Dg X0)$ on $\Db X$ is characterized by the
property that $\cF \in \ru\Dl X0$ if and only if $Li_C^*\cF|_C \in \ru\Dml
C0$ for all orbits $C$.  (This condition must be stated in terms of
$\ru\Dml C0$ because $Li_C^*$ does not, in general, take values in the
bounded derived category.) 

A dual version of the description above is as follows.  Let $\qg X$ denote
the category of $G$-equivariant quasicoherent sheaves, and let $\Dp X$
denote the full subcategory of the bounded-below derived category of $\qg
X$ consisting of objects with coherent cohomology.  The full subcategory
$\ru\Dpg X0 \subset \Dp X$ consists of objects $\cG$ such that
$\Hom(\cF,\cG) = 0$ for all $\cF \in \ru\Dl X{-1}$.  It turns out that $\cG
\in \ru\Dpg X0$ if and only if $Ri_C^!\cG|_C \in \ru\Dpg C0$ for all orbits
$C$. 

Let $j: U \hto X$ be the inclusion of a $G$-invariant open subscheme, and
let $i: Z \hto X$ be the inclusion of $G$-invariant closed subscheme.  The
restriction functor $j^*: \Db X \to \Db U$ and the push-forward functor
$i_*: \Db Z \to \Db X$ are $t$-exact for the staggered $t$-structure. 

We now turn to the construction of simple objects in $\cM(X)$.  Let $j: U
\hto X$ be as above.  Define two new functions $\ufl r, \ush r: \orb X \to
\Z$ by 
\begin{equation}\label{eqn:sharpflat}
\ufl r(C) =
\begin{cases}
r(C) & \text{if $C \subset U$,} \\
r(C) - 1 & \text{if $C \not\subset U$,}
\end{cases}
\qquad
\ush r(x) =
\begin{cases}
r(C) & \text{if $C \subset U$,} \\
r(C) + 1 & \text{if $C \not\subset U$.}
\end{cases}
\end{equation}
By~\cite[Proposition~8.7]{at:bs}, for any object $\cF \in \cM(U)$, there is
(up to isomorphism) a unique object in $\Db X$, denoted $\ru j_{!*}\cF$,
such that 
\begin{equation}\label{eqn:ic-defn}
\ru j_{!*}\cF|_U \simeq \cF
\qquad\text{and}\qquad
\ru j_{!*}\cF \in \flatru \Dl X0 \cap \sharpru \Dg X0.
\end{equation}
The assignment $\cF \mapsto \ru j_{!*}\cF$ defines a faithfully full
functor $\ru j_{!*}: \cM(U) \to \cM(X)$, and its essential image, denoted
$\cMn(X)$, is a Serre subcategory of $\cM(X)$.  In particular, if $\cF \in
\cM(U)$ is a simple object, then $\ru j_{!*}\cF$ is a simple object of
$\cM(X)$. 

More generally, given an orbit $C \subset X$, let $\partial C$ denote the
closed set $\overline C \ssm C$ (not regarded as having a fixed scheme
structure), and let $U_C = X \ssm \partial C$.  This is an open subscheme
of $X$ containing $C$ as a closed orbit.  Form the diagram of inclusions 
\[
\xymatrix{
C \ar[d]_{j_C} \ar[r]^{t_C} & U_C \ar[d]^{h_C} \\
\overline C \ar[r]_{i_C} & X}
\]
If $\cL \in \cg C$ is an irreducible vector bundle, $\cL[d_\cL]$ is a
simple object of $\cM(C)$.  We associate to $(C,\cL)$ a simple object of
$\cM(X)$, denoted $\cIC(C,\cL)$, by 
\[
\cIC(C,\cL) = i_{C*}(\ru j_{C!*}\cL[d_\cL]) \simeq
h_{C!*}(t_{C*}\cL[d_\cL]). 
\]
(The isomorphism $i_{C*} \circ \ru j_{C!*} \simeq h_{C!*} \circ t_{C*}$
follows from the $t$-exactness of $i_{C*}$ and the uniqueness
property~\eqref{eqn:ic-defn} for $h_{C!*}$.)  All simple objects of
$\cM(X)$ arise in this way.  Thus, the set of isomorphism classes of simple
objects is in bijection with $\orbvb X$. 

The results of~\cite[Section~9]{at:pd} associate to the perversity $r$ a
collection of thick subcategories $(\{\Dkl Xw\}, \{\Dkg Xw\})_{w \in \Z}$
that behave much like the weight filtration on $\ell$-adic mixed sheaves. 
These
subcategories, together called the \emph{skew co-$t$-structure} in {\it
loc.~cit.}, enjoy the following properties:
\begin{align*}
 \Dkl X{w-1} &\subset \Dkl Xw &
\Dkl X{w-1}[1] &= \Dkl Xw \\
 \Dkg X{w-1} &\supset \Dkg Xw &
\Dkg X{w-1}[1] &= \Dkg Xw
\end{align*}
An object of $\Dkl Xw \cap \Dkg Xw$ is said to be \emph{skew-pure} of skew
degree $w$.  The Skew Purity Theorem~\cite[Theorem~10.2]{at:pd} states that
every staggered
sheaf has a canonical filtration with skew-pure subquotients, and in
particular that a simple staggered sheaf is skew-pure.  Indeed, the skew
degree of such an object is given by
\[
 \skdeg \cIC(C,\cL) = 2d_\cL - \dim C. 
\]
(Skew degrees in~\cite{at:pd} differ from this formula by the addition of some constant depending on the choice of a dualizing complex, but we may ignore that constant here.)  The Skew Decomposition Theorem~\cite[Theorem~11.5]{at:pd} states that every skew-pure object is semisimple.

We conclude with the following useful fact.  Here, and throughout the paper, we write $\Hom^n(\cF,\cG)$ for $\Hom(\cF, \cG[n])$ for any two objects $\cF, \cG \in \Db X$, or even $\cF \in \Dm X$ and $\cG \in \Dp X$.  See~\cite[Proposition~2]{bez:pc} for the proof.

\begin{lem}\label{lem:hom-les}
Let $j: U \hto X$ be the inclusion of a $G$-invariant open subscheme, and let $\uZ = X \ssm U$ denote the complementary closed subset.  For any two objects $\cF_1 \in \Dm X$ and $\cF_2 \in \Dp X$, there is a long exact sequence
\begin{multline}\label{eqn:hom-les}
\cdots \to \Hom^{-1}(j^*\cF_1|_U,j^*\cF_2|_U) \to
\lim_{\substack{\to \\ Z'}} \Hom(Li^*_{Z'}\cF_1, Ri^!_{Z'}\cF_2) \to \\
\Hom(\cF_1,\cF_2) \to 
\Hom(j^*\cF_1|_U,\cF_2|_U) \to 
\lim_{\substack{\to \\ Z'}} \Hom^1(Li^*_{Z'}\cF, Ri^!_{Z'}\cG) \to
 \cdots,
\end{multline}
where $i_{Z'}: Z' \hto X$ ranges over all closed subscheme structures on $\uZ$.
\qed
\end{lem}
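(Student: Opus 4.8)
The plan is to obtain the sequence by applying $\Hom(\cF_1,-)$ to the \emph{local cohomology triangle} attached to the decomposition of $X$ into the open $U$ and its closed complement $\uZ$. First I would pass to the bounded-below derived category of $G$-equivariant quasicoherent sheaves, where $j^*$ admits a right adjoint $Rj_*$ and each closed inclusion $i_{Z'}$ admits the adjoints $Li^*_{Z'}$ and $Ri^!_{Z'}$ (these need not exist on $\Db X$, which is precisely why the statement is phrased with a colimit). In that setting there is a distinguished triangle
\[
R\Gamma_{\uZ}\cF_2 \to \cF_2 \to Rj_*(j^*\cF_2) \xrightarrow{[1]},
\]
where $R\Gamma_{\uZ}$ denotes derived sections supported on $\uZ$. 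Applying $\Hom(\cF_1,-)$ produces a long exact sequence of the groups $\Hom^n(\cF_1,-)$ whose middle term in each degree is $\Hom^n(\cF_1,\cF_2)$.

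It then remains to identify the two flanking families of terms. The terms coming from $Rj_*$ are handled by the $(j^*,Rj_*)$-adjunction,
\[
\Hom^n(\cF_1, Rj_*(j^*\cF_2)) \simeq \Hom^n(j^*\cF_1|_U, \cF_2|_U),
\]
which supplies the $U$-terms. For the local-cohomology terms I would invoke the standard description of $R\Gamma_{\uZ}$ as a filtered colimit over the closed subscheme structures $Z'$ on $\uZ$, directed by thickening,
\[
R\Gamma_{\uZ}\cF_2 \simeq \lim_{\substack{\to \\ Z'}} i_{Z'*}Ri^!_{Z'}\cF_2,
\]
with transition maps induced by the surjections between the structure sheaves; the identification $i_{Z'*}Ri^!_{Z'}\cF_2 \simeq \cRHom(\cO_{Z'},\cF_2)$ (with $\cO_{Z'}$ regarded as an $\cO_X$-module) makes this directed system transparent. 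The $(Li^*_{Z'}, i_{Z'*})$-adjunction then rewrites each term as $\Hom^n(Li^*_{Z'}\cF_1, Ri^!_{Z'}\cF_2)$, and reading off degrees $-1$, $0$, and $1$ recovers exactly the portion of \eqref{eqn:hom-les} displayed.

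The main obstacle is the interchange of $\Hom(\cF_1,-)$ with the defining filtered colimit, namely
\[
\Hom^n\bigl(\cF_1, \lim_{\substack{\to \\ Z'}} i_{Z'*}Ri^!_{Z'}\cF_2\bigr) \simeq \lim_{\substack{\to \\ Z'}} \Hom^n(\cF_1, i_{Z'*}Ri^!_{Z'}\cF_2),
\]
which fails for a general first argument and is where the hypotheses on $\cF_1$ and $\cF_2$ enter. I would reduce to the case in which $\cF_1$ is a single locally free coherent sheaf: since $\cg X$ has enough locally free objects, a bounded-above complex representing $\cF_1$ may be replaced by one of locally free sheaves, and because $\cF_2$ is bounded below only finitely many of those terms contribute to $\Hom^n$ in each degree, so a standard spectral-sequence (or way-out) argument reduces the claim to this case. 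For a locally free coherent $\cF_1$ one has $\Hom^n(\cF_1,\cG) \simeq H^n R\Gamma(\cG \otimes \cF_1^\vee)$, and on the Noetherian scheme $X$ cohomology commutes with filtered colimits; this furnishes the required interchange and completes the argument.
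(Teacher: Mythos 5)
Your argument is correct and is essentially the proof the paper is relying on: the paper gives no proof of this lemma itself but cites Bezrukavnikov's \emph{Perverse coherent sheaves (after Deligne)}, Proposition~2, where exactly this route is taken --- the triangle $R\Gamma_{\uZ}\cF_2 \to \cF_2 \to Rj_*j^*\cF_2$, the identification $R\Gamma_{\uZ}\cF_2 \simeq \varinjlim i_{Z'*}Ri^!_{Z'}\cF_2$, the two adjunctions, and the colimit interchange justified by resolving $\cF_1$ by locally frees and using boundedness below of $\cF_2$. The only point worth flagging is that everything must be carried out $G$-equivariantly (so one uses the cofinal system of $G$-invariant thickenings $V(\cI^n)$ and the fact that equivariant cohomology on a Noetherian scheme still commutes with filtered colimits), but this does not affect the structure of your argument.
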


\section{Restriction to an Open Subscheme}
\label{sect:pushfd}

Let $j: U \hto X$ be the inclusion of a $G$-invariant open subscheme.  In this section, we construct left and right adjoints to the restriction functor $j^*: \cM(X) \to \cM(U)$.

The perversities $\ufl r$ and $\ush r$ defined in~\eqref{eqn:sharpflat} give rise to their own staggered $t$-structures on $\Db X$, and hence their own intermediate extension functors $\flatru j_{!*}$ and $\sharpru j_{!*}$.
Now, $\flatru j_{!*}$ takes values in $\ffru\Dl X0 \cap \sfru\Dg X0$.  But $\ush(\ufl r) = r$, and clearly $\ffru\Dl X0 \subset \ru\Dl X0$, so we see that $\flatru j_{!*}$ actually takes values in $\cM(X)$.  The same holds for $\sharpru j_{!*}$, by similar reasoning.

We introduce the notation
\[
\ru j_! = \flatru j_{!*}
\qquad\text{and}\qquad
\ru j_* = \sharpru j_{!*}
\]
for these functors regarded as functors $\cM(U) \to \cM(X)$.

\begin{prop}\label{prop:adjt}
Let $j: U \hto X$ be the inclusion of a $G$-invariant open subscheme, and let $\uZ$ denote the closed subset complementary to $U$.  For $\cF \in \cM(U)$ and $\cG \in \cM(X)$, there are canonical isomorphisms
\[
\Hom(\ru j_!\cF, \cG) \simeq \Hom(\cF, j^*\cG) 
\qquad\text{and}\qquad
\Hom(\cG, \ru j_*\cF) \simeq \Hom(j^*\cG, \cF).
\]
There is a canonical surjective morphism $\ru j_!\cF \to \ru j_{!*}\cF$ whose kernel is supported on $\uZ$, and a canonical injective morphism $\ru j_{!*}\cF \to \ru j_*\cF$ whose cokernel is supported on $\uZ$.
\end{prop}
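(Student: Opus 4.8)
The plan is to derive every assertion from the long exact sequence of Lemma~\ref{lem:hom-les} together with the co-support conditions that define the three intermediate extensions. Recall that $\ru j_!$, $\ru j_{!*}$, and $\ru j_*$ are the intermediate-extension functors for the perversities $\flat r$, $r$, and $\sharp r$, so that $\ru j_!\cF \in \ffru\Dl X0 \cap \ru\Dg X0$, that $\ru j_{!*}\cF \in \flatru\Dl X0 \cap \sharpru\Dg X0$, and that $\ru j_*\cF$ lies in $\ru\Dl X0$ with a lower bound governed by the doubly-sharpened perversity $\sharp(\sharp r)$. On an orbit $C \subset \uZ$, replacing $r(C)$ by $r(C)+k$ shifts $d_\cL$ by $-k$, so these conditions become the orbitwise bounds
\[
Li^*_C(\ru j_!\cF)|_C \in \ru\Dml C{-2}, \qquad Ri^!_C(\ru j_*\cF)|_C \in \ru\Dpg C2,
\]
and, for the middle extension, $Li^*_C(\ru j_{!*}\cF)|_C \in \ru\Dml C{-1}$ together with $Ri^!_C(\ru j_{!*}\cF)|_C \in \ru\Dpg C1$. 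These bounds, which persist after replacing $C$ by any scheme structure $Z'$ on $\uZ$, are the only input the argument needs.

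For the first adjunction I would feed $\cF_1 = \ru j_!\cF$ and $\cF_2 = \cG$ into Lemma~\ref{lem:hom-les}. Since $(\ru j_!\cF)|_U = \cF$ and $\cG|_U = j^*\cG$, the central arrow of the sequence is precisely the restriction map $\Hom(\ru j_!\cF,\cG) \to \Hom(\cF, j^*\cG)$, so it is enough to kill the two flanking correction terms, namely the direct limit over scheme structures $Z'$ on $\uZ$ of $\Hom(Li^*_{Z'}(\ru j_!\cF), Ri^!_{Z'}\cG)$ and of its degree-one analogue. For each $Z'$ the bounds above give $Li^*_{Z'}(\ru j_!\cF) \in \ru\Dml{Z'}{-2}$, while $\cG \in \cM(X)$ gives $Ri^!_{Z'}\cG \in \ru\Dpg{Z'}0$; the two-step gap makes both $\Hom$ and $\Hom^1$ vanish by the orthogonality of the staggered $t$-structure on $Z'$, hence so do the direct limits. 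The second adjunction is entirely parallel, with $\cF_1 = \cG$ and $\cF_2 = \ru j_*\cF$: here $Li^*_{Z'}\cG \in \ru\Dml{Z'}0$ and $Ri^!_{Z'}(\ru j_*\cF) \in \ru\Dpg{Z'}2$, again a gap of two.

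The comparison morphisms are then produced formally. Applying the first adjunction to $\cG = \ru j_{!*}\cF$ identifies $\Hom(\ru j_!\cF, \ru j_{!*}\cF)$ with $\Hom(\cF,\cF)$, and I let $\alpha\colon \ru j_!\cF \to \ru j_{!*}\cF$ be the map corresponding to $\mathrm{id}_\cF$; applying the second adjunction to $\cG = \ru j_{!*}\cF$ produces $\beta\colon \ru j_{!*}\cF \to \ru j_*\cF$ in the same way. Because each adjunction isomorphism is realized by the restriction functor, $j^*\alpha$ and $j^*\beta$ are both $\mathrm{id}_\cF$; since $j^*$ is exact, all four of $\ker\alpha$, $\cok\alpha$, $\ker\beta$, $\cok\beta$ are supported on $\uZ$. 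It remains only to show $\cok\alpha = 0$ and $\ker\beta = 0$.

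This is the one genuinely non-formal step, and I would again use Lemma~\ref{lem:hom-les}. It suffices to prove that $\Hom(\ru j_{!*}\cF, \cK) = 0$ and $\Hom(\cK, \ru j_{!*}\cF) = 0$ for every $\cK \in \cM(X)$ with $j^*\cK = 0$, since $\cok\alpha$ is such a quotient of $\ru j_{!*}\cF$ and $\ker\beta$ such a subobject. For the first, the $U$-term $\Hom(\cF, j^*\cK)$ vanishes because $j^*\cK = 0$, so the sequence exhibits $\Hom(\ru j_{!*}\cF, \cK)$ as a quotient of the limit of the groups $\Hom(Li^*_{Z'}(\ru j_{!*}\cF), Ri^!_{Z'}\cK)$; but $Li^*_{Z'}(\ru j_{!*}\cF) \in \ru\Dml{Z'}{-1}$ while $Ri^!_{Z'}\cK \in \ru\Dpg{Z'}0$, so that limit vanishes. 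The second vanishing is dual: the $U$-term $\Hom(j^*\cK, \cF)$ is zero, and $Li^*_{Z'}\cK \in \ru\Dml{Z'}0$ pairs against $Ri^!_{Z'}(\ru j_{!*}\cF) \in \ru\Dpg{Z'}1$ to give zero. The main obstacle throughout is not the homological algebra but the bookkeeping of the first paragraph: one must check that the defining co-support conditions really do translate into these orbitwise degree bounds, and that the bounds are inherited by every scheme structure $Z'$ on $\uZ$.
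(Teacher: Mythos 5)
Your proof is correct and follows essentially the same route as the paper: both adjunctions are obtained from Lemma~\ref{lem:hom-les} by checking that the correction terms vanish because of the two-step gap between $\ru\Dml{Z'}{-2}$ and $\ru\Dpg{Z'}0$ (resp.\ $\ru\Dml{Z'}0$ and $\ru\Dpg{Z'}2$), and the comparison maps are the images of $\mathrm{id}_\cF$ under these adjunctions. The only difference is at the very end: the paper concludes surjectivity of $\ru j_!\cF \to \ru j_{!*}\cF$ by citing that $\ru j_{!*}\cF$, as an object of the Serre subcategory $\cMn(X)$, has no nonzero quotient supported on $\uZ$, whereas you rederive this vanishing directly from Lemma~\ref{lem:hom-les} using the one-step gap in the co-support conditions for $\ru j_{!*}$ --- a slightly more self-contained version of the same computation.
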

\begin{proof}
Let us apply Lemma~\ref{lem:hom-les} with $\cF_1 = \ru j_!\cF$ and $\cF_2 = \cG$.  Since $Li_{Z'}^*(\ru j_!\cF) \in \ffru\Dml {Z'}0 = \ru\Dml {Z'}{-2}$ and $Ri_{Z'}^!\cG \in \ru\Dpg X0$ for any closed subscheme structure $i_{Z'}: Z' \hto X$ on the complement of $U$, we see that
\[
\Hom(Li_{Z'}^*(\ru j_!\cF), Ri_{Z'}^!\cG) =
\Hom^1(Li_{Z'}^*(\ru j_!\cF), Ri_{Z'}^!\cG) = 0.
\]
It follows from~\eqref{eqn:hom-les} that $\Hom(\ru j_!\cF, \cG) \simeq \Hom(\cF, j^*\cG)$.  The proof of the statement that $\Hom(\cG, \ru j_*\cF) \simeq \Hom(j^*\cG, \cF)$ is similar.

The first adjointness statement gives us an isomorphism
$\Hom(\ru j_!\cF, \ru j_{!*}\cF) \simeq \Hom(\cF,\cF)$, and hence a canonical morphism $\ru j_!\cF \to \ru j_{!*}\cF$.  Since the restriction of this map to $U$ is an isomorphism, its kernel and cokernel must both be supported on $\uZ$.  But $\ru j_{!*}\cF$, an object of the Serre subcategory $\cMn(X)$, has no nonzero quotient supported on $\uZ$, so the morphism $\ru j_!\cF \to \ru j_{!*}\cF$ must be surjective.  Similarly, the second adjointness statement gives us a canonical injective morphism $\ru j_{!*}\cF \to \ru j_*\cF$ with cokernel supported on $\uZ$.
\end{proof}

Let us make note of a particular instance of the preceding proposition.  For an orbit $C \subset X$ and an irreducible vector bundle $\cL \in \cg C$, we put
\begin{align*}
M(C,\cL) &= i_{C*} \ru j_{C!}\cL[d_\cL] \simeq \ru h_{C!}t_{C*}\cL[d_\cL],
\\
N(C,\cL) &= i_{C*} \ru j_{C*}\cL[d_\cL] \simeq \ru h_{C*}t_{C*}\cL[d_\cL].
\end{align*}

\begin{prop}\label{prop:ker-supp}
For any $(C,\cL) \in \orbvb X$, there are canonical nonzero morphisms
\[
\phi: M(C,\cL) \to \cIC(C,\cL)
\qquad\text{and}\qquad
\psi: \cIC(C,\cL) \to N(C,\cL).
\]
The kernel of $\phi$ and cokernel of $\psi$ are both supported on $\partial C$. \qed
\end{prop}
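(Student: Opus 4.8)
The plan is to obtain $\phi$ and $\psi$ as a direct specialization of Proposition~\ref{prop:adjt}, applied to the open inclusion $h_C: U_C \hto X$. Recall that $U_C = X \ssm \partial C$ has complementary closed subset $\partial C$, and that $C$ sits inside $U_C$ as a closed orbit via $t_C$. The three objects $M(C,\cL)$, $\cIC(C,\cL)$, and $N(C,\cL)$ are, by their very definitions, the images of the single object $\cF = t_{C*}\cL[d_\cL]$ under the functors $\ru h_{C!}$, $\ru h_{C!*}$, and $\ru h_{C*}$ respectively.

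First I would verify that $\cF = t_{C*}\cL[d_\cL]$ lies in $\cM(U_C)$, so that Proposition~\ref{prop:adjt} applies. The object $\cL[d_\cL]$ is a simple object of $\cM(C)$, and since $C$ is closed in $U_C$, the push-forward $t_{C*}$ along the closed inclusion $t_C$ is $t$-exact for the staggered $t$-structure; hence $\cF \in \cM(U_C)$. Now applying Proposition~\ref{prop:adjt} to $h_C$ and $\cF$, and using the isomorphisms $M(C,\cL) \simeq \ru h_{C!}\cF$, $\cIC(C,\cL) \simeq \ru h_{C!*}\cF$, and $N(C,\cL) \simeq \ru h_{C*}\cF$, we obtain at once a canonical surjection $\phi: M(C,\cL) \to \cIC(C,\cL)$ whose kernel is supported on $\partial C$, together with a canonical injection $\psi: \cIC(C,\cL) \to N(C,\cL)$ whose cokernel is supported on $\partial C$. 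Both morphisms are nonzero because $\cIC(C,\cL) \ne 0$, being simple, and $\phi$ is surjective onto it while $\psi$ is injective out of it.

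I do not anticipate a substantive obstacle, since the statement is essentially a rephrasing of Proposition~\ref{prop:adjt} in the special case of the open inclusion $h_C$. The only point demanding care is the bookkeeping: one must confirm that the open subscheme in Proposition~\ref{prop:adjt} is taken to be $U_C$ (so that $\uZ = \partial C$), that the input object $\cF$ is regarded as living in $\cM(U_C)$ rather than $\cM(X)$, and that the functors $\ru h_{C!}$, $\ru h_{C!*}$, $\ru h_{C*}$ are correctly matched with the three defining isomorphisms for $M$, $\cIC$, and $N$. Once these identifications are made, the support statements for $\ker\phi$ and $\cok\psi$ are immediate.
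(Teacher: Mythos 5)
Your proposal is correct and is exactly the route the paper intends: the proposition is stated with its proof omitted precisely because it is the specialization of Proposition~\ref{prop:adjt} to the open inclusion $h_C\colon U_C \hto X$ applied to $t_{C*}\cL[d_\cL] \in \cM(U_C)$, using the defining isomorphisms $M(C,\cL) \simeq \ru h_{C!}t_{C*}\cL[d_\cL]$, $\cIC(C,\cL) \simeq h_{C!*}(t_{C*}\cL[d_\cL])$, and $N(C,\cL) \simeq \ru h_{C*}t_{C*}\cL[d_\cL]$. Your bookkeeping (including the $t$-exactness of $t_{C*}$ and the nonvanishing of $\phi$ and $\psi$) is all in order.
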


\section{Standard and Costandard Objects}
\label{sect:objects}

In this section, we prove that $\cM(X)$ is quasi-hereditary.
We begin with a result about closed orbits.

\begin{prop}\label{prop:closed-semis}
Let $i_C: C \hto X$ be the inclusion of a closed orbit.  Then $\cM(C)$ is a
semisimple category, and $i_{C*}: \cM(C) \to \cM(X)$ is an embedding of it
as a Serre subcategory of $\cM(X)$. 
\end{prop}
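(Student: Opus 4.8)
The plan is to reduce both assertions to a single semisimplicity principle governed by the parity of skew degrees. The key observation is that every simple object supported on $C$ has skew degree of one fixed parity: by the formula in Section~\ref{sect:review}, $\skdeg\cIC(C,\cL) = 2d_\cL - \dim C \equiv \dim C \pmod 2$, and likewise the simple objects $\cL[d_\cL]$ of $\cM(C)$ have $\skdeg(\cL[d_\cL]) = 2d_\cL - \dim C$. Thus any two distinct skew degrees occurring among such objects differ by at least $2$.

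First I would isolate the following auxiliary principle: \emph{if $\cF$ is a staggered sheaf all of whose composition factors have skew degrees of a single parity, then $\cF$ is semisimple.} To prove it, apply the Skew Purity Theorem to obtain a canonical filtration of $\cF$ with skew-pure subquotients, each of which is semisimple by the Skew Decomposition Theorem. The skew degrees appearing share one parity, so they differ pairwise by at least $2$. I then split this filtration from the top: if $\cF' \subset \cF$ is the step whose quotient $P = \cF/\cF'$ is the skew-pure piece of maximal skew degree $w$, then $\cF'$ is an iterated extension of skew-pure objects of skew degree $\le w-2$, so $\cF' \in \Dkl{X}{w-2}$ and hence $\cF'[1] \in \Dkl{X}{w-1}$, whereas $P \in \Dkg{X}{w}$. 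The orthogonality of the skew co-$t$-structure ($\Hom(\Dkg{X}{a},\Dkl{X}{b}) = 0$ for $a>b$) gives $\Ext^1(P,\cF') = \Hom(P,\cF'[1]) = 0$, so the extension splits; induction on length finishes the argument. The same reasoning applies verbatim with $C$ in place of $X$.

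Granting this, semisimplicity of $\cM(C)$ is immediate, since every object of $\cM(C)$ has composition factors $\cL[d_\cL]$ whose skew degrees are all $\equiv \dim C$. For the embedding, recall that $i_{C*}$ is $t$-exact, hence restricts to an exact functor $\cM(C) \to \cM(X)$, and that it carries the simple object $\cL[d_\cL]$ to the simple object $\cIC(C,\cL) = i_{C*}\cL[d_\cL]$ (here $\partial C = \emptyset$, so $\ru j_{C!*}$ is the identity). Distinct simples of $\cM(C)$ go to distinct simples of $\cM(X)$ under the parametrization by $\orbvb X$, and each simple has endomorphism ring $\Bbbk$; since $\cM(C)$ is semisimple and its image is a semisimple family of pairwise non-isomorphic simples, comparing $\Hom$-spaces summand by summand shows $i_{C*}$ is fully faithful.

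It remains to see that the essential image is a Serre subcategory. Closure under subobjects and quotients is clear, as a sub- or quotient object of a semisimple object supported on $C$ is again of that form. The crux, and the step I expect to be the main obstacle, is closure under extensions: given $0 \to E' \to E \to E'' \to 0$ in $\cM(X)$ with $E',E''$ in the image, one must show $E$ is too. The tempting route of proving $\Ext^1_{\cM(X)}(\cIC(C,\cL_1),\cIC(C,\cL_2)) = 0$ directly only half-works, because the co-$t$-structure orthogonality controls $\Hom(\Dkg{X}{a},\Dkl{X}{b})$ in a single direction of skew-degree comparison, leaving the opposite ordering uncovered. I would bypass this by applying the auxiliary principle to $E$ itself: all composition factors of $E$ are among the $\cIC(C,\cL)$, whose skew degrees are $\equiv \dim C \pmod 2$, so $E$ is semisimple with factors of that form, hence in the image. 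This is exactly where the parity observation does the essential work, converting a would-be two-sided $\Ext^1$-vanishing into the one-sided, skew-degree-ordered filtration splitting that the skew co-$t$-structure actually supplies.
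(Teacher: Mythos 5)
Your reduction to the auxiliary parity principle is the weak point, and the gap is in the direction of the orthogonality you invoke. In the canonical filtration supplied by the Skew Purity Theorem, the low skew degrees sit at the bottom (as subobjects) and the high degrees at the top (as quotients), so the class you must kill to split off the top piece $P$ (skew-pure of degree $w$) from $\cF'$ lies in $\Ext^1(P,\cF')=\Hom(P,\cF'[1])$, a map \emph{from} an object of $\Dkg Xw$ \emph{to} an object of $\Dkl X{w-1}$. But the orthogonality actually available from \cite[Proposition~11.2]{at:pd} --- and this is how it is used both in the paper's own proof of this proposition and in the proof of Theorem~\ref{thm:std-struc} --- is $\Hom(\Dkl Xa,\Dkg Xb)=0$ for $a<b$, i.e.\ $\Ext^1(A,B)=0$ when $\skdeg A\le\skdeg B$. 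The direction you cite, $\Hom(\Dkg Xa,\Dkl Xb)=0$ for $a>b$, is precisely the uncontrolled one: the nontrivial extensions inside staggered sheaves are exactly those from high skew degree (quotient) to low skew degree (sub), as the nonsplit sequence $0\to L_+(n)\to N_+(n)\to L_0(n,n)\to 0$ of Section~\ref{sect:exam} illustrates. The statement your splitting step needs --- $\Ext^1(A,B)=0$ whenever $\skdeg A\ge\skdeg B+2$ --- is Corollary~\ref{cor:ext1-skdeg}, which is proved only later via Theorem~\ref{thm:std-struc}, whose proof depends (through Lemma~\ref{lem:ext-std}, Corollary~\ref{cor:ic-semis}, and Proposition~\ref{prop:hom-ext}) on the very proposition you are proving; so as written the argument is either unsupported or circular.

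The repair is the one the paper uses, and it exploits the hypothesis that $C$ is closed, so that the simples in question are honest shifted sheaves $i_{C*}\cL[d_\cL]$. One computes
$\Ext^1(i_{C*}\cL[d_\cL],i_{C*}\cL'[d_{\cL'}])=\Hom^{d_{\cL'}-d_\cL+1}(i_{C*}\cL,i_{C*}\cL')$,
which vanishes for $d_\cL>d_{\cL'}+1$ because the $\Hom$-degree is negative, and for $d_\cL=d_{\cL'}+1$ because $\cL$ and $\cL'$ are then non-isomorphic irreducibles; the remaining case $d_\cL\le d_{\cL'}$ is exactly the one covered by the skew-degree vanishing of \cite[Proposition~11.2]{at:pd}. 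In other words, the ``high-to-low'' direction is controlled here by the standard $t$-structure, not by the skew co-$t$-structure, and the parity observation is not what does the work. The remainder of your argument --- full faithfulness on semisimple objects, closure of the image under subquotients, and closure under extensions once one knows that an extension of objects supported on $C$ is semisimple --- is fine modulo this point.
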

\begin{proof}
Note that since $C$ is closed, we have $\cIC(C,\cL) \simeq
i_{C*}\cL[d_\cL]$ for any irreducible vector bundle $\cL \in \cg C$.  Let
$\cM'(C) \subset \cM(X)$ be the Serre subcategory of $\cM(X)$ generated by
objects of the form $\cIC(C,\cL)$.  This is, of course, the smallest Serre
subcategory of $\cM(X)$ containing $i_{C*}(\cM(C))$.  We will show that
$\cM'(C)$ is semisimple.  Since $i_{C*}$ is faithful, that implies that
$\cM(C)$ is semisimple.  Moreover, since $i_{C*}(\cM(C))$ is closed under
direct sums and contains all simple objects of $\cM'(C)$, it also implies
that $i_{C*}(\cM(C)) = \cM'(C)$.

To show that $\cM'(C)$ is semisimple, it suffices to show that
\begin{equation}\label{eqn:ext-orbit}
\Ext^1(i_{C*}\cL[d_\cL], i_{C*}\cL'[d_{\cL'}]) = 0
\end{equation}
for any two irreducible vector bundles $\cL, \cL' \in \cg C$.  If $d_\cL >
d_{\cL'} + 1$, this vanishing is obvious.  On the other hand, if $d_\cL \le
d_{\cL'}$, then we have $\skdeg i_{C*}\cL[d_\cL] \le \skdeg
i_{C*}\cL'[d_{\cL'}]$.  In this case,~\eqref{eqn:ext-orbit} follows
from~\cite[Proposition~11.2]{at:pd}.
\end{proof}

Suppose we apply the preceding proposition to the closed embedding $t_C: C \hto U_C$.  Since $h_{C!*}$ embeds $\cM(U_C)$ as a Serre subcategory of $\cM(X)$, we obtain the following result.

\begin{cor}\label{cor:ic-semis}
For any orbit $C \subset X$, the functor $\cIC(C,\cdot): \cM(C) \to \cM(X)$ is an embedding of $\cM(C)$ as a semisimple Serre subcategory of $\cM(X)$.
\end{cor}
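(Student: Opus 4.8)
The strategy is precisely the one indicated just before the statement: to realize $\cIC(C,\cdot)$ as a composite of two Serre embeddings. First I would observe that $C$ is a \emph{closed} orbit of $U_C = X \ssm \partial C$. This lets me apply Proposition~\ref{prop:closed-semis} with the closed embedding $t_C: C \hto U_C$ playing the role of $i_C: C \hto X$: the conclusion is that $\cM(C)$ is semisimple and that $t_{C*}: \cM(C) \to \cM(U_C)$ is an embedding of $\cM(C)$ as a Serre subcategory. (As in the first line of that proof, because $C$ is closed in $U_C$ the object $t_{C*}\cL[d_\cL]$ is exactly the simple object of $\cM(U_C)$ attached to $(C,\cL)$.)

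Next I would invoke the properties of the intermediate-extension functor recalled in Section~\ref{sect:review}, applied to $j = h_C$: the functor $h_{C!*}: \cM(U_C) \to \cM(X)$ is fully faithful, $t$-exact, and identifies $\cM(U_C)$ with the Serre subcategory $\cMn(X)$ of $\cM(X)$. Since the defining isomorphism $i_{C*} \circ \ru j_{C!*} \simeq h_{C!*} \circ t_{C*}$ for $\cIC$ exhibits $\cIC(C,\cdot)$ as the composite $h_{C!*} \circ t_{C*}$, the functor $\cIC(C,\cdot)$ is a composition of two exact, fully faithful functors and is therefore itself exact and fully faithful. In particular it carries $\cM(C)$ isomorphically onto its essential image, and that image is semisimple because $\cM(C)$ is.

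It then remains only to check that this essential image is a Serre subcategory of $\cM(X)$, and here I would appeal to transitivity of the Serre property. Indeed, $h_{C!*}$ restricts to an equivalence between $\cM(U_C)$ and $\cMn(X)$, so it sends the Serre subcategory $t_{C*}(\cM(C)) \subset \cM(U_C)$ to a Serre subcategory of $\cMn(X)$; and $\cMn(X)$ is in turn a Serre subcategory of $\cM(X)$. Since a full subcategory that is closed under subobjects, quotients, and extensions inside a Serre subcategory is automatically closed under those operations inside the ambient category, the essential image of $\cIC(C,\cdot)$ is Serre in $\cM(X)$. The only point requiring any argument is this last transitivity observation, and it is entirely formal, so I anticipate no genuine obstacle: all the substantive input—semisimplicity and the Serre property for a single closed orbit—is already furnished by Proposition~\ref{prop:closed-semis}.
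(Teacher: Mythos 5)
Your proposal is correct and is exactly the paper's own argument: the corollary is deduced by applying Proposition~\ref{prop:closed-semis} to the closed embedding $t_C: C \hto U_C$ and then composing with the embedding $h_{C!*}$ of $\cM(U_C)$ as the Serre subcategory $\cMn(X)$ of $\cM(X)$. The transitivity of the Serre property that you spell out is the only (formal) point the paper leaves implicit.
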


\begin{prop}\label{prop:hom-ext}
Let $C, C' \subset X$ be orbits, and let $\cL \in \cg C$ and $\cL' \in \cg
{C'}$ be irreducible vector bundles.  Assume that either (i)~$C \not\subset
\overline C'$, or (ii)~$C = C'$ and $\cL \not\simeq \cL'$.
Then we have
\begin{align*}
\Hom(M(C,\cL), \cIC(C', \cL')) &= \Ext^1(M(C,\cL), \cIC(C',\cL')) = 0 \\
\Hom(\cIC(C',\cL'), N(C,\cL)) &= \Ext^1(\cIC(C',\cL'), N(C,\cL)) = 0.
\end{align*}
\end{prop}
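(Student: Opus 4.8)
The plan is to reduce all four vanishing statements to computations over the open subscheme $U_C = X \ssm \partial C$, on which $C$ is a \emph{closed} orbit, and then to invoke the semisimplicity established in Proposition~\ref{prop:closed-semis}. Throughout I use that, since $\cM(X)$ is the heart of the staggered $t$-structure, one has $\Hom_{\cM(X)} = \Hom^0_{\Db X}$ and $\Ext^1_{\cM(X)} = \Hom^1_{\Db X}$; it therefore suffices to control $\Hom^n$ in the derived category for $n = 0$ and $n = 1$. I treat the first line in detail, since the second is entirely dual.

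First I would apply Lemma~\ref{lem:hom-les} to the open inclusion $h_C : U_C \hto X$, taking $\cF_1 = M(C,\cL)$, $\cF_2 = \cIC(C',\cL')$, and $\uZ = \partial C$. Exactly as in the proof of Proposition~\ref{prop:adjt}, for every closed subscheme structure $i_{Z'} : Z' \hto X$ on $\partial C$ one has $Li^*_{Z'}M(C,\cL) \in \ffru\Dml{Z'}0 = \ru\Dml{Z'}{-2}$, while $Ri^!_{Z'}\cIC(C',\cL') \in \ru\Dpg{Z'}0$ because $\cIC(C',\cL') \in \cM(X)$. The staggered $t$-structure then forces
\[
\Hom^n(Li^*_{Z'}M(C,\cL),\, Ri^!_{Z'}\cIC(C',\cL')) = 0 \qquad\text{for } n \le 1.
\]
Feeding these vanishings into the long exact sequence \eqref{eqn:hom-les} shows that the restriction map
\[
\Hom^n_{\Db X}(M(C,\cL),\cIC(C',\cL')) \to \Hom^n_{\Db{U_C}}(t_{C*}\cL[d_\cL],\, \cIC(C',\cL')|_{U_C})
\]
is an isomorphism for $n = 0$ and injective for $n = 1$, where I have used $M(C,\cL)|_{U_C} = t_{C*}\cL[d_\cL]$. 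Hence it is enough to show the right-hand groups vanish for $n = 0, 1$. Next I split into cases. In case~(i), $C \not\subset \overline{C'}$; since $\overline{C'}$ is a closed $G$-invariant set and $C$ is a single orbit, this forces $C \cap \overline{C'} = \emptyset$, so $t_{C*}\cL[d_\cL]$ and $\cIC(C',\cL')|_{U_C}$ are supported on disjoint closed subsets of $U_C$ and all $\Hom^n$ between them vanish. In case~(ii), $C' = C$, so $\cIC(C,\cL')|_{U_C} = t_{C*}\cL'[d_{\cL'}]$, and both objects lie in the image of $t_{C*} : \cM(C) \to \cM(U_C)$. As $C$ is closed in $U_C$, Proposition~\ref{prop:closed-semis} applies with $X$ replaced by $U_C$: the objects $t_{C*}\cL[d_\cL]$ and $t_{C*}\cL'[d_{\cL'}]$ are non-isomorphic simple objects, so $\Hom^0 = 0$, while $\Hom^1 = \Ext^1_{\cM(U_C)} = 0$ by \eqref{eqn:ext-orbit} (using $\cL \not\simeq \cL'$, and that $\Ext^1$ in a Serre subcategory agrees with $\Ext^1$ in the ambient category). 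This completes the first line.

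For the second line I would apply Lemma~\ref{lem:hom-les} with $\cF_1 = \cIC(C',\cL')$ and $\cF_2 = N(C,\cL)$. Dually to the above, $Li^*_{Z'}\cIC(C',\cL') \in \ru\Dml{Z'}0$ by right $t$-exactness of $Li^*_{Z'}$, and $Ri^!_{Z'}N(C,\cL) \in \ru\Dpg{Z'}2$ by the argument dual to that for $M(C,\cL)$ in Proposition~\ref{prop:adjt}. The corner groups again vanish in degrees $n \le 1$, so the problem reduces to the vanishing of $\Hom^n_{\Db{U_C}}(\cIC(C',\cL')|_{U_C},\, t_{C*}\cL[d_\cL])$ for $n = 0,1$, which the same two cases settle. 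The main thing to get right is the corner-term estimate: it is precisely the degree shift by $2$ built into the perversities $\ufl r$ and $\ush r$ (equivalently, the equalities $\ffru\Dml{Z'}0 = \ru\Dml{Z'}{-2}$ and its dual) that kills both the $n=0$ and the $n=1$ corner $\Hom$-groups, and hence allows the bare $\Hom$-adjunction of Proposition~\ref{prop:adjt} to be upgraded to the $\Ext^1$ statement by means of the long exact sequence.
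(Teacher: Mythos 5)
Your proposal is correct and follows essentially the same route as the paper: the long exact sequence of Lemma~\ref{lem:hom-les} over $U_C$ with the corner-term vanishing coming from the perversity shifts $\ffru\Dml{Z'}0 = \ru\Dml{Z'}{-2}$ (and its dual), followed by the disjoint-support/closed-orbit dichotomy and Proposition~\ref{prop:closed-semis}. The only difference is that you spell out the dual case for $N(C,\cL)$, which the paper omits as ``similar.''
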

\begin{proof}
Form the exact sequence~\eqref{eqn:hom-les} with $\cF_1 = M(C,\cL)$, $\cF_2 = \cIC(C',\cL')$, and $U = U_C$.  For any closed subscheme structure $i_{Z'}: Z' \hto X$ on $\partial C$, we have $Li_{Z'}^*M(C,\cL) \in \q\Dml {Z'}{-2}$, and $Ri_{Z'}^!\cIC(C',\cL') \in \q\Dpg {Z'}0$.  It follows that
\[
\Hom(Li_{Z'}^*M(C,\cL), Ri_{Z'}^!\cIC(C',\cL')) =
\Hom^1(Li_{Z'}^*M(C,\cL), Ri_{Z'}^!\cIC(C',\cL')) = 0.
\]
Note that $h_C^*M(C,\cL)$ is isomorphic to $t_{C*}\cL[d_\cL]$; in
particular, this is a simple object of $\cM(U_C)$.  Next,
$h_C^*\cIC(C',\cL')$ is either $0$ or a simple object, and in the latter
case, it is distinct from $t_{C*}\cL[d_\cL]$.  Therefore,
$\Hom(h_C^*M(C,\cL), h_C^*\cIC(C',\cL')) = 0$.  Moreover, under the
assumptions in the statement of the proposition, the support of
$h_C^*\cIC(C',\cL')$ is either disjoint from $C$ or equal to $C$.  In the
first case, it is obvious that $\Ext^1(h_C^*M(C,\cL), h_C^*\cIC(C',\cL')) =
0$, and in the second, this follows from
Proposition~\ref{prop:closed-semis}.  We can then see from the exact
sequence~\eqref{eqn:hom-les} that $\Hom(M(C,\cL), \cIC(C', \cL')) =
\Ext^1(M(C,\cL), \cIC(C',\cL')) = 0$.  The proofs of the statements with
$N(C,\cL)$ are similar and will be omitted.
\end{proof}

Together, Propositions~\ref{prop:ker-supp} and~\ref{prop:hom-ext} give us
the following result.

\begin{thm}\label{thm:main}
Let $\prec$ be any total order on $\orbvb X$ such that $C \subsetneq
\overline C'$ implies $(C,\cL) \prec (C',\cL')$ for all $\cL \in \cg C$ and
$\cL' \in \cg {C'}$.  With respect to this order, $M(C,\cL)$ is a standard
cover of $\cIC(C,\cL)$, and $N(C,\cL)$ is a costandard hull.  Thus,
$\cM(X)$ is a quasi-hereditary category.\qed
\end{thm}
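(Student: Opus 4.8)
The plan is to show that for each $(C,\cL) \in \orbvb X$ the morphism $\phi \colon M(C,\cL) \to \cIC(C,\cL)$ of Proposition~\ref{prop:ker-supp} exhibits $M(C,\cL)$ as a standard cover of the simple object $L(s) := \cIC(C,\cL)$, where $s = (C,\cL)$, and dually that $\psi$ exhibits $N(C,\cL)$ as a costandard hull; quasi-heredity then follows immediately from the definition. Throughout I will use the single order-theoretic observation that any object of $\cM(X)$ supported on $\partial C$ lies in $\fA_{\prec s}$: its composition factors have the form $\cIC(C'',\cL'')$ with $C'' \subsetneq \overline C$, and the hypothesis on $\prec$ gives $(C'',\cL'') \prec (C,\cL)$.

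For the standard cover, the exact sequence $0 \to \ker\phi \to M(C,\cL) \to \cIC(C,\cL) \to 0$ of Proposition~\ref{prop:ker-supp} has $\ker\phi$ supported on $\partial C$, hence in $\fA_{\prec s}$; since $\fA_{\preceq s}$ is Serre this places $M(C,\cL)$ in $\fA_{\preceq s}$, and as $C$ is disjoint from $\partial C$ it gives $[M(C,\cL):L(s)] = 1$. It then remains to prove that $M(C,\cL)$ is projective in $\fA_{\preceq s}$ with simple head $L(s)$, for then $\phi$ is automatically a projective cover. By induction on length it is enough to compute $\Hom(M(C,\cL), L(t))$ and $\Ext^1(M(C,\cL), L(t))$ for $t \preceq s$. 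If $t = (C',\cL') \prec s$, the defining property of $\prec$ forces either $C \not\subset \overline{C'}$, or $C = C'$ with $\cL \not\simeq \cL'$; in either case Proposition~\ref{prop:hom-ext} makes both groups vanish.

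The diagonal case $t = s$ is the crux, being exactly what Proposition~\ref{prop:hom-ext} excludes. Here I would run the long exact sequence of Lemma~\ref{lem:hom-les} with $\cF_1 = M(C,\cL)$, $\cF_2 = \cIC(C,\cL)$ and $U = U_C$, just as in the proof of Proposition~\ref{prop:hom-ext}. Because $Li_{Z'}^* M(C,\cL) \in \ru\Dml{Z'}{-2}$ and $Ri_{Z'}^!\cIC(C,\cL) \in \ru\Dpg{Z'}0$ for every closed subscheme structure $Z'$ on $\partial C$, the terms $\Hom^n(Li_{Z'}^* M(C,\cL), Ri_{Z'}^!\cIC(C,\cL))$ appearing in the limit over $Z'$ vanish for $n = 0,1$. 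The sequence therefore identifies $\Hom(M(C,\cL), \cIC(C,\cL))$ with the endomorphisms of $t_{C*}\cL[d_\cL]$ in $\cM(U_C)$, which is $\Bbbk$, and embeds $\Ext^1(M(C,\cL), \cIC(C,\cL))$ into $\Ext^1(t_{C*}\cL[d_\cL], t_{C*}\cL[d_\cL])$ computed in $\cM(U_C)$. This last group vanishes: applying Proposition~\ref{prop:closed-semis} to the closed orbit $t_C \colon C \hto U_C$ shows that $t_{C*}\cM(C)$ is a \emph{semisimple} Serre subcategory of $\cM(U_C)$, so any self-extension of the simple object $t_{C*}\cL[d_\cL]$ lies in it and splits. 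Hence $\Ext^1(M(C,\cL), L(s)) = 0$ and $\dim\Hom(M(C,\cL), L(t)) = \delta_{st}$, so $M(C,\cL)$ is projective in $\fA_{\preceq s}$ with head the simple object $L(s)$, and $\phi$ is a projective cover.

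The costandard hull is obtained by the dual argument: one uses the injection $\psi$ of Proposition~\ref{prop:ker-supp}, the off-diagonal vanishing of $\Hom(\cIC(C',\cL'), N(C,\cL))$ and $\Ext^1(\cIC(C',\cL'), N(C,\cL))$ from Proposition~\ref{prop:hom-ext}, and the same reduction to $\cM(U_C)$ to handle the diagonal term. Since every simple object of $\cM(X)$ thereby acquires both a standard cover and a costandard hull, $\cM(X)$ is quasi-hereditary. I expect the diagonal $\Ext^1$-vanishing to be the only genuine obstacle: it is precisely the self-extension group omitted from Proposition~\ref{prop:hom-ext}, and it is exactly where the semisimplicity of $\cM(C)$---transported to $U_C$ via Proposition~\ref{prop:closed-semis}---is indispensable.
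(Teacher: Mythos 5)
Your proposal is correct and follows essentially the same route as the paper, which simply asserts that the theorem follows from Propositions~\ref{prop:ker-supp} and~\ref{prop:hom-ext}. You rightly note that those two propositions do not literally cover the diagonal self-extension group $\Ext^1(M(C,\cL),\cIC(C,\cL))$, and your treatment of it --- rerunning the long exact sequence of Lemma~\ref{lem:hom-les} over $U_C$ and invoking the semisimplicity of $t_{C*}\cM(C)$ from Proposition~\ref{prop:closed-semis} --- supplies exactly the argument the paper leaves implicit.
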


\section{Staggered Sheaves on Closed Subschemes}
\label{sect:full}

We will now make use of standard and costandard objects to show that for any $G$-invariant closed subscheme $i: Z \hto X$, $\cM(Z)$ embeds as a Serre subcategory of $\cM(X)$.  We begin with a result on $\Hom$- and $\Ext^1$-groups.

\begin{prop}\label{prop:full}
Let $i: Z \hto X$ be a $G$-invariant closed subscheme.  For any $\cF, \cG \in \cM(Z)$, we have
\begin{align}
\Hom_Z(\cF,\cG) &\simeq \Hom_X(i_*\cF,i_*\cG)\label{eqn:hom-full} \\
\Ext^1_Z(\cF,\cG) &\simeq \Ext^1_X(i_*\cF,i_*\cG)\label{eqn:ext-full}
\end{align}
\end{prop}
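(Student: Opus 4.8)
The plan is to deduce both isomorphisms from a single statement about the functor $Ri^!$, and then to isolate the one point at which the fine structure of the $s$-structure is genuinely needed. Since $i$ is a closed immersion, $i_*$ has a right adjoint $Ri^!$ at the level of derived categories, so for every $n$ there is a natural isomorphism
\[
\Hom^n_X(i_*\cF, i_*\cG) \simeq \Hom^n_Z(\cF, Ri^! i_*\cG).
\]
Because $\cM(X)$ and $\cM(Z)$ are hearts of $t$-structures, $\Hom_X$ and $\Ext^1_X$ coincide with $\Hom^0$ and $\Hom^1$ computed in $\Db X$ (and likewise over $Z$). Moreover, under this adjunction the maps induced by $i_*$ in \eqref{eqn:hom-full} and \eqref{eqn:ext-full} correspond to the maps induced by the unit $\eta\colon \cG \to Ri^! i_*\cG$. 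Thus \eqref{eqn:hom-full} and \eqref{eqn:ext-full} are exactly the cases $n=0$ and $n=1$ of the claim that $\eta$ induces isomorphisms $\Hom^n_Z(\cF,\cG) \to \Hom^n_Z(\cF, Ri^! i_*\cG)$.

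The whole proposition therefore rests on the following claim: for $\cG \in \cM(Z)$, the cone $Q$ of $\eta$ lies in $\ru\Dg Z2$. Granting this, the fact that $\cF \in \ru\Dl Z0$ together with the orthogonality $\Hom(\ru\Dl Za, \ru\Dg Zb)=0$ for $a<b$ gives $\Hom^n_Z(\cF,Q)=\Hom(\cF,Q[n])=0$ for all $n \le 1$, since $Q[n]\in\ru\Dg Z{2-n}$. Feeding these vanishings into the long exact sequence obtained by applying $\Hom_Z(\cF,-)$ to the distinguished triangle $\cG \to Ri^! i_*\cG \to Q \to \cG[1]$ yields the desired isomorphisms for $n=0$ and $n=1$ simultaneously.

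To prove the claim I would argue orbit by orbit. First, $i_*\cG \in \ru\Dg X0$ because $i_*$ is $t$-exact, so the characterization of $\ru\Dpg X0$ via $Ri_C^!$ immediately gives $Ri^! i_*\cG \in \ru\Dg Z0$; hence $Q$ already lies in $\ru\Dg Z0$, and what remains is the sharpening to $\ru\Dg Z2$, that is, $\ru h^0(Ri^! i_*\cG)\simeq \cG$ and $\ru h^1(Ri^! i_*\cG)=0$. (Note that the first of these is equivalent to \eqref{eqn:hom-full} itself, which confirms that the claim is the genuine content and not a formality.) Testing membership in $\ru\Dg Z2$ against an orbit $i_C\colon C \hto Z$, and writing $i_C^X = i\circ i_C$, the relation $Ri_C^! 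Ri^! = R(i_C^X)^!$ identifies $Ri_C^! Q$ with the cone of the base-change comparison $Ri_C^!\cG \to R(i_C^X)^! i_*\cG$, both of whose terms lie in $\ru\Dpg C0$. One must show this cone lies in $\ru\Dpg C2$.

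This last bound is where I expect the real work to be, and it is the one step that cannot be done by formal manipulation: in the ordinary $t$-structure the analogous cone sits only in degree $\ge 1$ — indeed, already for a fat point one has $\Ext^1_X(i_*\cF,i_*\cG)\ne\Ext^1_Z(\cF,\cG)$ — so the gain of a full extra staggered degree is exactly what forces one to invoke the $s$-structure. Concretely, the comparison map measures the conormal geometry of $Z$ in $X$ along $C$, and the recessed and split hypotheses, through the way $\step$ grows with codimension (reflected in the factor of $2$ in $\skdeg\cIC(C,\cL)=2d_\cL-\dim C$), should push the correction term into staggered degree $\ge 2$. Establishing this codimension estimate, using the results of \cite{at:bs,at:pd}, is the crux; once it is in hand, the reductions above are purely formal.
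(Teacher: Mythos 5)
Your reduction is clean and correctly set up: the adjunction $\Hom^n_X(i_*\cF,i_*\cG)\simeq\Hom^n_Z(\cF,Ri^!i_*\cG)$ is valid, the identification of the comparison maps with those induced by the unit $\eta$ is right, and the orbit-wise reformulation via $Ri_C^!Ri^!=R(i\circ i_C)^!$ is the correct way to test the cone $Q$ against the co-aisle. The factor-of-$2$ heuristic is also consistent with what actually happens (e.g.\ for a fat point $\Spec\Bbbk[x]/(x^2)$ with conormal weight of step $\le -1$, the piece $\Ext^i_{\cO_X}(\cO_Z,\cO_Z)$ lands in staggered degree $\ge 2i$). But the proposal is not a proof: the entire content of the proposition has been concentrated into the claim $Q\in\ru\Dg Z2$ (equivalently, the bound $\ru\Dpg C2$ on the orbit-wise comparison cones), and you explicitly leave that claim unproven, deferring it to an unstated ``codimension estimate.'' Nothing in the reductions touches the $s$-structure axioms, so the one step that carries all the difficulty is missing. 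Worse, the claim you would need is strictly stronger than the proposition: the isomorphism on $\Ext^1$ only requires $\Hom^0(\cF,Q)=0$ together with injectivity of $\Hom^1(\cF,Q)\to\Hom^2(\cF,\cG)$, whereas you are asserting $\Hom^1(\cF,Q)=0$ outright. Even if true, establishing it would require a genuinely new local computation of $R(i\circ i_C)^!i_*\cG$ relative to $Ri_C^!\cG$ that appears nowhere in the cited results.

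The paper avoids this entirely by never analyzing $Ri^!i_*$. It runs a noetherian induction on $Z$ and $X$ combined with a dévissage on composition length (two applications of the five lemma reduce to simple $\cF$ and $\cG$), and for simple objects it splits into cases according to how the supporting orbits $C$, $C'$ are related: the case $C=C'$ uses semisimplicity of $\cM(C)$ inside $\cM(X)$ (Corollary~\ref{cor:ic-semis}); incomparable orbits are handled by the long exact sequence of Lemma~\ref{lem:hom-les} with a crude degree estimate; and the case $C\subset\overline{C'}$ uses the costandard object $N(C',\cL')$ and Proposition~\ref{prop:hom-ext} to identify $\Ext^1_X(\cIC(C,\cL),\cIC(C',\cL'))$ with a $\Hom$-group on a strictly smaller closed subscheme, where the inductive hypothesis applies. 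If you want to salvage your approach, you would need to either prove the $\ru\Dg Z2$ bound from the recessed/split axioms directly, or weaken the claim to exactly what the long exact sequence requires and supply an argument for the $\Ext^2$-injectivity step.
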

(To avoid confusion, in this proposition and in its proof, we explicitly label all $\Hom$- and $\Ext$-groups with the name of the scheme over which that group is to be computed.)
\begin{proof}
Assume that $\cF$ and $\cG$ are both nonzero (otherwise, the statement is trivial).  We proceed by noetherian induction, and assume the statement is already known when either $Z$ or $X$ is replaced by some proper closed subscheme.  

We begin by proving the proposition in the case where $\cF$ and $\cG$, and
hence $i_*\cF$ and $i_*\cG$, are both simple.  Suppose $i_*\cF \simeq
\cIC(C,\cL)$ and $i_*\cG \simeq \cIC(C',\cL')$. In the case where $C = C'$,
Corollary~\ref{cor:ic-semis} tells us that both sides
of~\eqref{eqn:hom-full} are isomorphic to
$\Hom_C(\cL[d_\cL],\cL'[d_{\cL'}])$, and that both sides
of~\eqref{eqn:ext-full} vanish.    Henceforth, assume $C \ne C'$.  Both
sides of~\eqref{eqn:hom-full} automatically vanish, since there are no
nonzero morphisms between nonisomorphic simple objects.  To
prove~\eqref{eqn:ext-full}, we must consider the various ways in which $C$
and $C'$ may be related.

Suppose first that $C \not\subset \overline C{}'$ and $C' \not\subset
\overline C$.  Let $U = X \ssm (\overline C \cap \overline C{}')$.  Then
$\cIC(C,\cL)|_U$ and $\cIC(C',\cL')|_U$ have disjoint supports, so
$\Ext^1_U(\cIC(C,\cL)|_U, \cIC(C',\cL')|_U) = 0$.  Also, for any closed
subscheme structure $i_{Z'}: Z' \hto X$ on $\overline C \cap \overline
C{}'$, we have $Li_{Z'}^*\cIC(C,\cL) \in \q\Dml {Z'}{-1}$ and
$Ri_{Z'}^!\cIC(C',\cL') \in \q\Dpg {Z'}{1}$, so it follows that
$\Hom^1_{Z'}(Li_{Z'}^*\cIC(C,\cL), Ri_{Z'}^!\cIC(C',\cL')) = 0$, and then
\begin{equation}\label{eqn:ext-incomp}
\Ext^1_X(\cIC(C,\cL), \cIC(C',\cL')) = 0
\end{equation}
by Lemma~\ref{lem:hom-les}.  The
same reasoning applies to $Z$, so both sides of~\eqref{eqn:ext-full} above
vanish.

Next, suppose that $C \subset \overline C{}'$.  Let $J =
N(C',\cL')/\cIC(C',\cL')$, and consider the exact sequence
\begin{multline*}
\Hom_X(\cIC(C,\cL), N(C',\cL')) \to \Hom_X(\cIC(C,\cL), J) \to \\
\Ext^1_X(\cIC(C,\cL), \cIC(C',\cL')) \to \Ext^1_X(\cIC(C,\cL),
N(C',\cL')). 
\end{multline*}
The first and last terms vanish by Proposition~\ref{prop:hom-ext}, so the
middle two terms are isomorphic.  By Proposition~\ref{prop:ker-supp}, there
is some closed subscheme structure $\kappa: Y \hto X$ on $\partial C'$ on
which $J$ is supported: $J \simeq \kappa_*J'$ for some $J' \in \cM(Y)$. 
Since $C \subsetneq \overline C{}'$, we likewise have $\cIC(C,\cL) \simeq
\kappa_*\cF'$ for some simple object $\cF' \in \cM(Y)$.  Since $Y$ is
strictly smaller than $Z$, we know by the inductive assumption that
$\Hom_X(\cIC(C,\cL),J) \simeq \Hom_Y(\cF',J')$.  The same reasoning also
applies to $Z$, so we have 
\[
\Ext^1_Z(\cIC(C,\cL), \cIC(C',\cL')) \simeq
\Ext^1_X(\cIC(C,\cL), \cIC(C',\cL')) \simeq
\Hom_Y(\cF', J'),
\]
as desired.  A similar argument using $M(C,\cL)$ applies to the case where
$C' \subset \overline C$.  This completes the proof of the proposition in
the case where $\cF$ and $\cG$ are both simple. 

For the general case, we proceed by induction on the sum of the lengths of
$\cF$ and $\cG$.  Suppose that $\cF$ is not simple, and find some short
exact sequence 
\[
0 \to \cF' \to \cF \to \cF'' \to 0
\]
with $\cF'$ and $\cF''$ both nonzero (and therefore of strictly smaller
length than $\cF$).  Consider the eight-term exact sequence
\begin{multline*}
0 \to \Hom(\cF'',\cG) \to \Hom(\cF,\cG) \to \Hom(\cF',\cG) \to \\
\Ext^1(\cF'',\cG) \to \Ext^1(\cF,\cG) \to \Ext^1(\cF',\cG) \to
\Hom^2(\cF'',\cG),
\end{multline*}
together with its analogue obtained by applying $i_*$ to every object.  By assumption, the four morphisms
\begin{align*}
\Hom(\cF',\cG) &\to \Hom(i_*\cF',i_*\cG), &
\Ext^1(\cF',\cG) &\to \Ext^1(i_*\cF',i_*\cG), \\
\Hom(\cF'',\cG) &\to \Hom(i_*\cF'',i_*\cG), &
\Ext^1(\cF'',\cG) &\to \Ext^1(i_*\cF'',i_*\cG)
\end{align*}
are isomorphisms, and $\Hom^2(\cF'',\cG) \to \Hom^2(i_*\cF'',i_*\cG)$
is injective because $i_*$ is faithful.  By two applications of the five
lemma, we see that $\Hom(\cF,\cG) \to \Hom(i_*\cF,i_*\cG)$ and
$\Ext^1(\cF,\cG) \to \Ext^1(i_*\cF,i_*\cG)$ are isomorphisms, as desired. 
A similar argument establishes~\eqref{eqn:hom-full}
and~\eqref{eqn:ext-full} in the case where $\cF$ is simple but $\cG$ is
not.
\end{proof}

\begin{thm}\label{thm:serre}
Let $i: Z \hto X$ be a $G$-invariant closed subscheme.  Then $i_*(\cM(Z))$ is a Serre subcategory of $\cM(X)$.
\end{thm}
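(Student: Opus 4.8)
The plan is to realize $i_*(\cM(Z))$ as the Serre subcategory of $\cM(X)$ generated by a specified set of simple objects. Recall that for any collection of simple objects, the full subcategory of $\cM(X)$ consisting of objects all of whose composition factors lie in that collection is automatically closed under subobjects, quotients, and extensions, hence Serre; so the theorem reduces to identifying $i_*(\cM(Z))$ with such a subcategory. Let $\mathcal B \subset \cM(X)$ be the full subcategory of objects whose composition factors are all of the form $\cIC(C,\cL)$ with the orbit $C$ contained in $Z$. I will show $i_*(\cM(Z)) = \mathcal B$.

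The inclusion $i_*(\cM(Z)) \subseteq \mathcal B$ is the easy half. The orbits of $Z$ are exactly the orbits of $X$ lying in $Z$, and for such an orbit the characterizing property of the intermediate extension together with the $t$-exactness of closed push-forward yields $i_*\cIC_Z(C,\cL) \simeq \cIC_X(C,\cL)$; thus $i_*$ carries the simple objects of $\cM(Z)$ to precisely the simples appearing in the definition of $\mathcal B$. Since $i_*$ is exact, it turns a composition series of any $\cG \in \cM(Z)$ into a composition series of $i_*\cG$, so every composition factor of $i_*\cG$ is one of these simples, i.e.\ $i_*\cG \in \mathcal B$.

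The reverse inclusion $\mathcal B \subseteq i_*(\cM(Z))$ is the substantive step, which I would prove by induction on length in $\cM(X)$. The base case is the identification $\cIC_X(C,\cL) \simeq i_*\cIC_Z(C,\cL)$ already noted. For the inductive step, given $\cF \in \mathcal B$ choose a short exact sequence $0 \to \cF' \to \cF \to \cIC(C,\cL) \to 0$ with simple quotient $\cIC(C,\cL)$, where $C \subset Z$. Then $\cF'$ again lies in $\mathcal B$ and has strictly smaller length, so $\cF' \simeq i_*\cG'$ by induction. The class of this extension lives in $\Ext^1_X(i_*\cIC_Z(C,\cL), i_*\cG')$, which by Proposition~\ref{prop:full}---via the map induced by $i_*$---is identified with $\Ext^1_Z(\cIC_Z(C,\cL), \cG')$. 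Forming in $\cM(Z)$ the extension $0 \to \cG' \to \cG \to \cIC_Z(C,\cL) \to 0$ corresponding to the preimage class and applying the exact functor $i_*$, we obtain in $\cM(X)$ an extension of $\cIC(C,\cL)$ by $i_*\cG'$ with the same class; hence $\cF \simeq i_*\cG \in i_*(\cM(Z))$.

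The crux of the argument, and its only delicate point, is that the $\Ext^1$-isomorphism of Proposition~\ref{prop:full} is the natural map induced by $i_*$, so that a class in $\Ext^1_X(i_*\cIC_Z(C,\cL), i_*\cG')$ genuinely descends to an honest extension inside $\cM(Z)$; this is exactly what makes $i_*(\cM(Z))$ closed under extensions. Closure under subobjects and quotients then requires no separate argument, being built into the composition-factor definition of $\mathcal B$. I would only remark, to justify $i_*\cIC_Z(C,\cL) \simeq \cIC_X(C,\cL)$, that the perversity and $s$-structure on $Z$ are the ones induced from $X$, so that $d_\cL$ and the intermediate extension on the orbit $C$ are the same whether computed in $Z$ or in $X$.
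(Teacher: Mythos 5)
Your proof is correct and rests on the same key ingredient as the paper's: the natural maps of Proposition~\ref{prop:full} being isomorphisms (in particular surjective on $\Ext^1$), used to descend an extension class in $\cM(X)$ between objects pushed forward from $Z$ to an honest extension in $\cM(Z)$. The paper verifies fullness and extension-closure directly rather than inducting on length through the Serre subcategory generated by the simples $\cIC(C,\cL)$ with $C \subset Z$, but this is only a difference in packaging; if anything, your version makes the closure under subobjects and quotients more explicit than the paper does.
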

\begin{proof}
The isomorphism~\eqref{eqn:hom-full} tells us that the functor $i_*: \cM(Z) \to \cM(X)$ is full.  It remains to show that the essential image of $i_*$ is stable under extensions.  Consider short exact sequence
\[
0 \to i_*\cF' \to \cF \to i_*\cF'' \to 0
\]
in $\cM(X)$, with $\cF', \cF'' \in \cM(Z)$.  The object $\cF$ is in the essential image of $i_*$ if and only if the corresponding element of $\Ext^1(i_*\cF'',i_*\cF')$ is in the image of the natural map $\Ext^1(\cF'',\cF') \to \Ext^1(i_*\cF'',i_*\cF')$, and the latter is surjective by~\eqref{eqn:ext-full}.
\end{proof}

In particular, since all simple staggered sheaves on a nonreduced scheme are supported on the associated reduced scheme, we obtain the following result.

\begin{cor}
Suppose $X$ is not reduced, and let $t: X_\red \to X$ be the inclusion of the associated reduced scheme.  Then $t_* : \cM(X_\red) \to \cM(X)$ is an equivalence of categories. \qed
\end{cor}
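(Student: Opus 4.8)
The plan is to verify that $t_*$ is fully faithful and essentially surjective.  Faithfulness is automatic, since $t$ is a closed immersion and $t_*$ is a pushforward along it.  Fullness is exactly the content of the isomorphism~\eqref{eqn:hom-full} from Proposition~\ref{prop:full} applied to the closed embedding $i = t$: it yields $\Hom_{X_\red}(\cF,\cG) \simeq \Hom_X(t_*\cF, t_*\cG)$ for all $\cF, \cG \in \cM(X_\red)$.  Thus $t_*$ is fully faithful, and it remains only to show that every object of $\cM(X)$ lies in its essential image.

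For essential surjectivity, I would first observe that the reduced $G$-orbits of $X$ coincide with those of $X_\red$, since the two schemes share the same underlying topological space and orbits carry their reduced structure by convention.  Hence $\orbvb X = \orbvb{X_\red}$, and for each pair $(C,\cL)$ the closure $\overline C$, taken with its reduced structure, already lies inside $X_\red$.  Because the construction of $\cIC(C,\cL)$ depends only on $\overline C$ together with the numerical data $\step \cL$ and $r(C)$—all of which are insensitive to nilpotents and therefore the same whether computed over $X$ or over $X_\red$—the inclusion $i_C$ factors through $t$, and one obtains $t_*\bigl(\cIC_{X_\red}(C,\cL)\bigr) \simeq \cIC_X(C,\cL)$.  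In other words, every simple object of $\cM(X)$ lies in the essential image of $t_*$.

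Finally, I would invoke Theorem~\ref{thm:serre} with $Z = X_\red$: it tells us that $t_*(\cM(X_\red))$ is a Serre subcategory of $\cM(X)$, hence in particular closed under extensions.  Since every object of $\cM(X)$ has finite length, a straightforward induction on length—using a short exact sequence with simple sub and shorter quotient at each step—shows that any Serre subcategory containing all the simple objects must be the whole category.  As we have just seen that all simple objects of $\cM(X)$ lie in $t_*(\cM(X_\red))$, this forces $t_*(\cM(X_\red)) = \cM(X)$, establishing essential surjectivity.  Together with full faithfulness, this proves that $t_*$ is an equivalence of categories.

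I expect the only point genuinely requiring care to be the identification $t_*\cIC_{X_\red}(C,\cL) \simeq \cIC_X(C,\cL)$, that is, checking that the intermediate-extension construction truly commutes with $t_*$.  This rests on the $t$-exactness of the closed pushforward $t_*$ and on the fact that the orbits, their closures, and the defining integers $\step \cL$ and $r(C)$ are unchanged by passing to $X_\red$; everything else in the argument is formal.
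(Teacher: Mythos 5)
Your proposal is correct and follows the same route as the paper, which deduces the corollary in one line from Theorem~\ref{thm:serre} together with the observation that every simple staggered sheaf on $X$ is supported on $X_\red$; you have simply spelled out the details (full faithfulness via~\eqref{eqn:hom-full}, identification of the simple objects, and the finite-length induction showing a Serre subcategory containing all simples is everything). No gaps.
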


\section{Structure of Standard and Costandard Objects}
\label{sect:struc}

Let $K(C,\cL)$ denote the kernel of the canonical morphism $M(C,\cL) \to \cIC(C,\cL)$, and let $J(C,\cL)$ denote the cokernel of the canonical morphism $\cIC(C,\cL) \to N(C,\cL)$.  The goal of this section is to describe the structure of $K(C,\cL)$ and $J(C,\cL)$.

\begin{lem}\label{lem:ext-std}
Let $(C,\cL), (C',\cL') \in \orbvb X$, and assume that $C \not\subset \partial C'$.  There are natural isomorphisms
\begin{align*}
\Hom(K(C,\cL), \cIC(C',\cL')) &\simeq \Ext^1(\cIC(C,\cL), \cIC(C',\cL')), \\
\Hom(\cIC(C',\cL'), J(C,\cL)) &\simeq \Ext^1(\cIC(C',\cL'), \cIC(C,\cL)).
\end{align*}
\end{lem}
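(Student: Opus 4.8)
The plan is to extract both isomorphisms from the long exact $\Hom$-sequences attached to the defining short exact sequences of $K(C,\cL)$ and $J(C,\cL)$. Writing $T = \cIC(C',\cL')$ for brevity, I would apply $\Hom(-,T)$ to
\[
0 \to K(C,\cL) \to M(C,\cL) \xrightarrow{\phi} \cIC(C,\cL) \to 0
\]
and focus on the four-term stretch
\[
\Hom(M(C,\cL),T) \to \Hom(K(C,\cL),T) \xrightarrow{\delta} \Ext^1(\cIC(C,\cL),T) \to \Ext^1(M(C,\cL),T).
\]
The connecting map $\delta$ is then an isomorphism as soon as its two neighbouring terms $\Hom(M(C,\cL),T)$ and $\Ext^1(M(C,\cL),T)$ both vanish, and these are precisely the two vanishing statements furnished by Proposition~\ref{prop:hom-ext}. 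The second isomorphism is entirely dual: apply $\Hom(T,-)$ to $0 \to \cIC(C,\cL) \to N(C,\cL) \to J(C,\cL) \to 0$ and use the vanishing of $\Hom(T,N(C,\cL))$ and $\Ext^1(T,N(C,\cL))$ from the same proposition.

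It remains to check when Proposition~\ref{prop:hom-ext} is applicable. Since $C$ and $C'$ are orbits, they are either equal or disjoint, so the hypothesis $C \not\subset \partial C' = \overline{C'} \ssm C'$ is equivalent to ``$C \not\subset \overline{C'}$ or $C = C'$.'' Comparing this with the hypotheses of Proposition~\ref{prop:hom-ext} (either $C \not\subset \overline{C'}$, or $C = C'$ with $\cL \not\simeq \cL'$), I see that the proposition covers every case permitted by the lemma except the diagonal one $(C,\cL) = (C',\cL')$. In all the covered cases the argument of the previous paragraph finishes the proof, so the one point demanding separate treatment --- and the main obstacle --- is the diagonal case, in which $\Hom(M(C,\cL),\cIC(C,\cL)) \ne 0$ and $\delta$ is no longer forced to be an isomorphism.

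For the diagonal case I would show instead that both sides of each claimed isomorphism are zero. On the left, since $M(C,\cL)$ is a standard cover, $[M(C,\cL):\cIC(C,\cL)] = 1$; this single copy accounts for the image of $\phi$, so every composition factor of $K(C,\cL) = \ker\phi$ is $\prec (C,\cL)$, and in particular none is isomorphic to $\cIC(C,\cL)$. A nonzero map $K(C,\cL) \to \cIC(C,\cL)$ would be surjective and display $\cIC(C,\cL)$ as a composition factor of $K(C,\cL)$, a contradiction; thus $\Hom(K(C,\cL),\cIC(C,\cL)) = 0$, and the dual count (using $[N(C,\cL):\cIC(C,\cL)] = 1$) gives $\Hom(\cIC(C,\cL),J(C,\cL)) = 0$. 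On the right, Corollary~\ref{cor:ic-semis} places $\cIC(C,\cL)$ in a semisimple Serre subcategory of $\cM(X)$; such a subcategory is closed under extensions, so any self-extension of $\cIC(C,\cL)$ lies in it and therefore splits, giving $\Ext^1(\cIC(C,\cL),\cIC(C,\cL)) = 0$. Both sides thus vanish, completing the diagonal case and the proof.
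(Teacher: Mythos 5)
Your proof is correct and takes essentially the same route as the paper: off the diagonal, the isomorphisms come from the long exact sequences together with the vanishing statements of Proposition~\ref{prop:hom-ext}, and in the diagonal case both sides are shown to vanish, with Corollary~\ref{cor:ic-semis} handling the $\Ext^1$ side exactly as in the paper. The only cosmetic difference is that for $\Hom(K(C,\cL),\cIC(C,\cL))=0$ you invoke the multiplicity $[M(C,\cL):\cIC(C,\cL)]=1$ while the paper uses the support statement of Proposition~\ref{prop:ker-supp}; both are valid.
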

\begin{proof}
If $(C,\cL) \simeq (C',\cL')$, then both $\Hom$-groups vanish by
Proposition~\ref{prop:ker-supp}, and both $\Ext^1$-groups vanish by
Corollary~\ref{cor:ic-semis}.  Suppose now that $(C,\cL) \not\simeq
(C',\cL')$.  The first isomorphism above then comes from the long exact
sequence associated to $0 \to K(C,\cL) \to M(C,\cL) \to \cIC(C,\cL) \to 0$
using Proposition~\ref{prop:hom-ext}.  The second isomorphism is similar,
using $N(C,\cL)$ instead.
\end{proof}

\begin{lem}\label{lem:ext-finite-below}
For any $(C,\cL) \in \orbvb X$, there are only finitely many pairs
$(C',\cL') \in \orbvb X$ with $C \not\subset \partial C'$ such that either 
\[
\Ext^1(\cIC(C,\cL), \cIC(C',\cL')) \ne 0
\qquad\text{or}\qquad
\Ext^1(\cIC(C',\cL'), \cIC(C,\cL)) \ne 0.
\]
\end{lem}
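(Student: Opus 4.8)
The plan is to deduce the statement directly from Lemma~\ref{lem:ext-std}, which converts the two $\Ext^1$-groups in question into $\Hom$-groups attached to the two \emph{fixed} objects $K(C,\cL)$ and $J(C,\cL)$. It is worth stressing at the outset why the assertion has any content: there are only finitely many orbits $C'$, but a single orbit may carry infinitely many irreducible vector bundles $\cL' \in \cg{C'}$, so $\orbvb X$ is typically infinite and the finiteness cannot come from a mere count of orbits.

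First I would note that the hypothesis $C \not\subset \partial C'$ is precisely the hypothesis of Lemma~\ref{lem:ext-std}, so that lemma applies to every pair $(C',\cL')$ appearing in the statement. It yields natural isomorphisms
\[
\Ext^1(\cIC(C,\cL), \cIC(C',\cL')) \simeq \Hom(K(C,\cL), \cIC(C',\cL'))
\]
and
\[
\Ext^1(\cIC(C',\cL'), \cIC(C,\cL)) \simeq \Hom(\cIC(C',\cL'), J(C,\cL)).
\]
Since $\cIC(C',\cL')$ is simple, a nonzero map $K(C,\cL) \to \cIC(C',\cL')$ is surjective and a nonzero map $\cIC(C',\cL') \to J(C,\cL)$ is injective. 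Hence the first group is nonzero only if $\cIC(C',\cL')$ occurs as a quotient of $K(C,\cL)$, and the second only if it occurs as a subobject of $J(C,\cL)$; in either case $\cIC(C',\cL')$ is a composition factor of the relevant fixed object. As $K(C,\cL)$ and $J(C,\cL)$ are objects of $\cM(X)$, they have finite length and therefore only finitely many composition factors up to isomorphism. Because simple objects of $\cM(X)$ are parametrized by $\orbvb X$, this bounds the number of admissible pairs $(C',\cL')$ and completes the argument.

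There is no serious obstacle once Lemma~\ref{lem:ext-std} is available; the argument is essentially a finite-length count, and the one substantive choice is to route the bound through $K(C,\cL)$ and $J(C,\cL)$ rather than through skew degrees. One could instead attempt to bound $\skdeg \cIC(C',\cL')$---equivalently $\step \cL'$---using a weight-type constraint on nonvanishing $\Ext^1$, in the spirit of the proof of Proposition~\ref{prop:closed-semis}; but that strategy would then require knowing that only finitely many irreducible bundles on $C'$ have $\step$ in a bounded range, which is not needed here. The finite length of the two fixed objects $K(C,\cL)$ and $J(C,\cL)$ in $\cM(X)$ is the real engine of the proof.
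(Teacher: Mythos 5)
Your proposal is correct and follows the paper's own argument exactly: the paper also deduces the lemma from Lemma~\ref{lem:ext-std} by noting that $\Hom(K(C,\cL), \cIC(C',\cL'))$ and $\Hom(\cIC(C',\cL'), J(C,\cL))$ vanish for all but finitely many $(C',\cL')$. The paper dismisses that last point as ``clear,'' whereas you spell out the finite-length justification, which is a welcome addition but not a different route.
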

\begin{proof}
Clearly, $\Hom(K(C,\cL), \cIC(C',\cL'))$ and $\Hom(\cIC(C',\cL'),
J(C,\cL))$ vanish for all but finitely many $(C',\cL')$, so this statement
follows from the previous lemma.
\end{proof}

\begin{thm}\label{thm:std-struc}
Suppose $\skdeg \cIC(C,\cL) = w$.  Then $K(C,\cL)$ is skew-pure of degree $w-1$, and there is a natural isomorphism
\[
K(C,\cL) = \bigoplus_{\{(C',\cL') \in \orbvb X \mid C' \subset \partial C\}}
\Ext^1(\cIC(C,\cL), \cIC(C',\cL'))^* \otimes \cIC(C',\cL').
\]
Similarly, $J(C,\cL)$ is skew-pure of degree $w+1$, and there is a natural isomorphism
\[
J(C,\cL) = \bigoplus_{\{(C',\cL') \in \orbvb X \mid C' \subset \partial C\}}
\Ext^1(\cIC(C',\cL'), \cIC(C',\cL')) \otimes \cIC(C',\cL').
\]
\end{thm}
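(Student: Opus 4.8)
The plan is to show that $K(C,\cL)$ is skew-pure of degree $w-1$ and then read off the decomposition; the statement for $J(C,\cL)$ is entirely dual, so I will concentrate on $K(C,\cL)$. Granting skew-purity, the Skew Decomposition Theorem~\cite[Theorem~11.5]{at:pd} makes $K(C,\cL)$ semisimple. By Proposition~\ref{prop:ker-supp} it is supported on $\partial C$, so its simple summands are of the form $\cIC(C',\cL')$ with $C' \subset \partial C$; in particular $C \not\subset \partial C'$, so Lemma~\ref{lem:ext-std} applies and gives $\Hom(K(C,\cL),\cIC(C',\cL')) \simeq \Ext^1(\cIC(C,\cL),\cIC(C',\cL'))$. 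For a semisimple object the canonical evaluation morphism $K(C,\cL) \to \bigoplus_{(C',\cL')} \Hom(K(C,\cL),\cIC(C',\cL'))^* \otimes \cIC(C',\cL')$ is an isomorphism, the sum effectively running over $C'\subset\partial C$ since the other terms vanish by support; substituting the previous identification yields the asserted formula, and the sum is finite by Lemma~\ref{lem:ext-finite-below}. For $J(C,\cL)$ one uses instead the co-evaluation isomorphism $\bigoplus \Hom(\cIC(C',\cL'),J(C,\cL))\otimes\cIC(C',\cL') \simeq J(C,\cL)$ together with the second isomorphism of Lemma~\ref{lem:ext-std}; this is what explains the absence of a dual on the $\Ext^1$ in the second displayed formula.

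It therefore remains to prove skew-purity, and one inequality is cheap. Any simple quotient of $K(C,\cL)$ is some $\cIC(C',\cL')$ with $\Hom(K(C,\cL),\cIC(C',\cL')) \ne 0$, hence with $\Ext^1(\cIC(C,\cL),\cIC(C',\cL')) \ne 0$ by Lemma~\ref{lem:ext-std}. Since $\cIC(C,\cL)$ has skew degree $w$, \cite[Proposition~11.2]{at:pd} forces $\skdeg \cIC(C',\cL') < w$, i.e.\ $\le w-1$. The top graded piece of the skew weight filtration of $K(C,\cL)$ is a nonzero quotient, hence contains such a simple quotient, so its degree is $\le w-1$; thus $K(C,\cL) \in \Dkl{X}{w-1}$. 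Dually, every simple subobject of $J(C,\cL)$ has skew degree $\ge w+1$, and the bottom graded piece is a subobject, so $J(C,\cL) \in \Dkg{X}{w+1}$.

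The main obstacle is the complementary bound $K(C,\cL) \in \Dkg{X}{w-1}$ (and dually $J(C,\cL) \in \Dkl{X}{w+1}$). Because the skew weight filtration of~\cite[Theorem~10.2]{at:pd} is canonical, hence functorial, the objects of $\cM(X)$ lying in $\Dkg{X}{w-1}$ form a Serre subcategory, so this class is closed under subobjects; since $K(C,\cL) \hookrightarrow M(C,\cL)$, it suffices to show $M(C,\cL) \in \Dkg{X}{w-1}$, i.e.\ that $M(C,\cL)$ has no composition factor of skew degree $\le w-2$. Here one must exploit the defining feature of the standard object: $M(C,\cL) = \ru j_!(\cL[d_\cL])$ is by construction the $\ufl r$-intermediate extension $\flatru j_{!*}(t_{C*}\cL[d_\cL])$, hence $\ufl r$-skew-pure of degree $w$, and its derived restriction to any closed subscheme structure on $\partial C$ lies in strictly negative $\ufl r$-staggered degrees. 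Feeding this strictness into the comparison between the $\ufl r$- and $r$-skew co-$t$-structures---lowering the perversity by one along $\partial C$ raises the skew degree of a fixed object there by two---should yield $M(C,\cL) \in \Dkg{X}{w-1}$. I expect this last step to be the crux: the naive perversity comparison gives only $\Dkg{X}{w-2}$, and the improvement by one---precisely what forces $K(C,\cL)$ to be \emph{pure} rather than merely of skew degree $\le w-1$---rests on the Beilinson--Bernstein--Deligne-type strictness of the intermediate extension. It is worth emphasizing that this purity is special to staggered sheaves and has no analogue for ordinary mixed perverse sheaves, where $j_!$ can carry arbitrarily negative weights; what rescues us is exactly that the staggered $\ru j_!$ is itself an intermediate extension. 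Combining the two inequalities gives that $K(C,\cL)$ is skew-pure of degree $w-1$, and the dual argument (replacing $M$ by the $\ush r$-intermediate extension $N(C,\cL)$, of which $J(C,\cL)$ is a quotient) gives that $J(C,\cL)$ is skew-pure of degree $w+1$, completing the proof.
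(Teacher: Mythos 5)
Your first half---granting skew-purity, deducing semisimplicity of $K(C,\cL)$ from the Skew Decomposition Theorem, restricting the summands to $C' \subset \partial C$ by support, and identifying the multiplicity spaces via $\Hom(K(C,\cL),\cIC(C',\cL'))$ and Lemma~\ref{lem:ext-std}---is exactly the paper's argument, as is the cheap upper bound $K(C,\cL) \in \Dkl X{w-1}$. The gap is precisely where you say you expect it: the lower bound $M(C,\cL) \in \Dkg X{w-1}$ is only asserted (``should yield''), not proved, and the route you sketch toward it rests on a miscount. Lowering the perversity by $1$ on the orbits of $\partial C$ shifts the skew co-$t$-structure there by \emph{one}, not two: the factor of $2$ in $\skdeg \cIC(C',\cL') = 2d_{\cL'} - \dim C'$ appears only because the simple objects themselves move when the perversity changes; for a \emph{fixed} object of the derived category the skew degree changes by one (compare $\Dkl X{w-1}[1] = \Dkl Xw$ with the fact that $d_{\cL'}$ increases by one). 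With the correct count there is no deficit of one to be made up, so no BBD-type strictness is needed, and none is invoked in the paper.

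The actual mechanism is the local-to-global sequence of Lemma~\ref{lem:hom-les}, which you never use. To show $\Hom(\cF, M(C,\cL)) = 0$ for $\cF \in \Dkl X{w-2}$ one checks two things: on $U_C$, $M(C,\cL)|_{U_C} \simeq \cIC(C,\cL)|_{U_C}$ is skew-pure of degree $w$; and on any closed subscheme structure $Z'$ on $\partial C$, the fact that $M(C,\cL)$ is a simple---hence skew-pure of degree $w$---object for the $\ufl r$-structure forces $Ri_{Z'}^! M(C,\cL)$ to lie in the $\ufl r$-version of $\Dpkg{Z'}{w}$, which coincides with $\Dpkg{Z'}{w-1}$ by the one-step relabelling above. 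Both relevant $\Hom$-groups in~\eqref{eqn:hom-les} then vanish for degree reasons. (Your reduction via ``the objects of $\cM(X)$ lying in $\Dkg X{w-1}$ form a Serre subcategory'' is fine; but your proposed input---that $Li^*_{Z'}M(C,\cL)$ sits in strictly negative $\ufl r$-staggered degrees---is off target, since what does the work is the $\ufl r$-\emph{skew-purity} of $M(C,\cL)$ and the resulting co-$t$-structure condition on $Ri^!_{Z'}M(C,\cL)$, not the staggered $t$-structure condition on $Li^*_{Z'}M(C,\cL)$.) In short: same skeleton as the paper, but the one step you flag as the crux is genuinely missing, and the heuristic you offer would not close it.
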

\begin{proof}
By the Skew Purity Theorem~\cite[Theorem~10.2]{at:pd}, $M(C,\cL)$ admits a
canonical filtration
\[
0 = M_{\sqsubseteq w -k} \subset \cdots \subset M_{\sqsubset w - 1} \subset M_{\sqsubseteq w} = M(C,\cL)
\]
where each $M_{\sqsubseteq w - i} \in \Dkl X{w-i}$, and $M_{\sqsubseteq w -
i}/M_{\sqsubseteq w - i - 1}$ is skew-pure of degree $w - i$.  (In our
case, we know that $M_{\sqsubseteq w}$ must be the last step of the
filtration, because the unique simple quotient of $M(C,\cL)$ is skew-pure
of degree $w$.)  Let us begin by showing that in fact $M_{\sqsubset w-2} =
0$.  It suffices to show that $\Hom(\cF, M(C,\cL)) = 0$ for all $\cF \in
\Dkl X{w-2}$.  We employ the exact sequence of Lemma~\ref{lem:hom-les} with
$U = U_C$ and $\uZ = \partial C$.  Clearly, $M(C,\cL)|_{U_C} \simeq
\cIC(C,\cL)|_{U_C} \in \Dkg Uw$, so $\Hom(\cF|_{U_C}, M(C,\cL)|_{U_C}) =
0$.

The Skew Purity Theorem also tells us that any simple staggered sheaf is
skew-pure.  Since $M(C,\cL)$ arises by the functor $\ru j_{C!} = \flatru
j_{C!*}$, we see that $M(C,\cL)$ is actually a simple object in the
category $\flatru \cM(X)$ of staggered sheaves defined with respect to the
perversity $\ufl r$, and it is skew-pure of degree $w$ with respect to the
notion of purity defined with respect to $\ufl r$.  In the notation
of~\cite{at:pd}, we have $M(C,\cL) \in
{}_{\sst \llcorner \flat r \lrcorner}\Dkl Xw \cap
{}_{\sst \llcorner \flat r \lrcorner}\Dkg Xw$.
It follows that for any subscheme structure $i_{Z'}: Z' \hto X$ on $\partial C$, we have
$Ri_{Z'}^!M(C,\cL) \in 
{}_{\sst \llcorner \flat r \lrcorner}\Dpkg {Z'}w$.
Since $\ufl r(C') = r(C') - 1$ for all $C' \subset \partial C$, this
assertion is equivalent to $Ri_{Z'}^!M(C,\cL) \in \Dpkg {Z'}{w-1}$, where
the last category is defined as usual with respect to the perversity
$r$. It follows that $\Hom(Li_{Z'}^*\cF, Ri_{Z'}^!M(C,\cL)) = 0$, so
$\Hom(\cF, M(C,\cL)) = 0$, and $M_{\sqsubseteq w-2} = 0$, as desired.

Next, according to the Skew Decomposition
Theorem~\cite[Theorem~11.5]{at:pd}, any skew-pure object is semisimple.  In
particular, $M_{\sqsubseteq w - 1}$ and $M(C,\cL)/M_{\sqsubseteq w-1}$ are
both semisimple.  Because $M(C,\cL)$ has a unique simple quotient, we must
in fact have $M(C,\cL)/M_{\sqsubseteq w-1} \simeq \cIC(C,\cL)$, and then we
identify $M_{\sqsubseteq w-1}$ with $K(C,\cL)$.  We may write
\[
K(C,\cL) \simeq \bigoplus_{\{(C',\cL') \in \orbvb X \mid C' \subset \partial C\}} V_{C',\cL'} \otimes \cIC(C',\cL')
\]
for some vector spaces $V_{C',\cL'}$, which are zero for all but finitely
many pairs $(C',\cL')$.  By applying the functor $\Hom(\cdot,
\cIC(C',\cL'))$ to both sides, one sees that $V_{C',\cL'} \simeq
\Hom(K(C,\cL),\cIC(C',\cL'))^*$.  The desired formula for $K(C,\cL)$ then
follows from Lemma~\ref{lem:ext-std}. 
The second part of the theorem is proved similarly.
\end{proof}

From this structure theorem, we can deduce the following constraint on
when $\Ext^1$-groups may be nonzero.  This strengthens the
$\Ext^1$-vanishing result contained in~\cite[Proposition~11.2]{at:pd}.

\begin{cor}\label{cor:ext1-skdeg}
Suppose $\Ext^1(\cIC(C,\cL), \cIC(C',\cL')) \ne 0$.  Then either $C \subset \partial C'$ or $C' \subset \partial C$, and $\skdeg \cIC(C',\cL') = \skdeg \cIC(C,\cL) - 1$.
\end{cor}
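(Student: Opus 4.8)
The plan is to start from a nonzero class in $\Ext^1(\cIC(C,\cL), \cIC(C',\cL'))$ and split into the two possible incidence configurations, handling one by the standard structure object $K(C,\cL)$ and the other by the costandard structure object $J(C',\cL')$. First I would dispose of the diagonal case: if $C = C'$, then both simples lie in the image of $\cIC(C,\cdot)$, which by Corollary~\ref{cor:ic-semis} embeds $\cM(C)$ as a semisimple Serre subcategory, so every $\Ext^1$ between them vanishes, contradicting the hypothesis. Hence $C \ne C'$. For distinct orbits the condition $C \subset \partial C'$ is equivalent to $C \subset \overline{C'}$ (and symmetrically), so the two alternatives in the conclusion amount to saying that one orbit lies in the closure of the other.

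Next I would branch on whether $C \subset \partial C'$. Suppose first that $C \not\subset \partial C'$. Then Lemma~\ref{lem:ext-std} identifies $\Ext^1(\cIC(C,\cL), \cIC(C',\cL'))$ with $\Hom(K(C,\cL), \cIC(C',\cL'))$, which is therefore nonzero. By Theorem~\ref{thm:std-struc}, $K(C,\cL)$ is a direct sum of simple objects $\cIC(C'',\cL'')$ with $C'' \subset \partial C$, and it is skew-pure of degree $w - 1$, where $w = \skdeg \cIC(C,\cL)$. A nonzero map from this semisimple object onto the simple $\cIC(C',\cL')$ forces $\cIC(C',\cL')$ to occur as one of its summands; in particular $C' \subset \partial C$, which is the second alternative. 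Moreover, since the skew co-$t$-structure categories are thick, any direct summand of a skew-pure object of degree $w-1$ is itself skew-pure of degree $w-1$, so $\skdeg \cIC(C',\cL') = w - 1$, as required.

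It remains to treat the case $C \subset \partial C'$, which gives the first alternative directly. Here I would pass to the costandard side by applying Lemma~\ref{lem:ext-std} with the roles of $(C,\cL)$ and $(C',\cL')$ exchanged. The hypothesis of the lemma then reads $C' \not\subset \partial C$, which holds because $C \subset \partial C'$ forces $\dim C < \dim C'$, whereas every orbit in $\partial C$ has dimension strictly less than $\dim C$. The second isomorphism of the lemma then identifies $\Ext^1(\cIC(C,\cL), \cIC(C',\cL'))$ with $\Hom(\cIC(C,\cL), J(C',\cL'))$, which is nonzero. By Theorem~\ref{thm:std-struc}, $J(C',\cL')$ is skew-pure of degree $w'+1$, where $w' = \skdeg \cIC(C',\cL')$; being the target of a nonzero map from the simple $\cIC(C,\cL)$ into this semisimple object, $\cIC(C,\cL)$ must appear as a summand, so $\skdeg \cIC(C,\cL) = w'+1$, i.e.\ $\skdeg \cIC(C',\cL') = \skdeg \cIC(C,\cL) - 1$.

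The point requiring care — and the main obstacle — is the bookkeeping that lets essentially one structure theorem cover both incidence directions. The theorem for $K(C,\cL)$ only directly controls extensions $\Ext^1(\cIC(C,\cL), -)$ whose second argument is supported on $\partial C$; to reach the reversed incidence $C \subset \partial C'$ one must pivot to the costandard object $J(C',\cL')$ of the \emph{larger} orbit and verify the closure/dimension inequality $C' \not\subset \partial C$ needed to invoke Lemma~\ref{lem:ext-std} in the exchanged form. Once that is arranged, the skew-degree statement in both branches is immediate from the purity of $K(C,\cL)$ and $J(C',\cL')$ together with the thickness (hence summand-closure) of the skew co-$t$-structure.
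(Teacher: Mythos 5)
Your proof is correct and follows essentially the same route as the paper: rule out $C = C'$ via Corollary~\ref{cor:ic-semis}, then combine Lemma~\ref{lem:ext-std} with the structure and skew-purity of $K(C,\cL)$ and $J(C',\cL')$ given by Theorem~\ref{thm:std-struc}. The only (harmless) differences are that you extract the dichotomy directly from the support of $K(C,\cL)$ rather than citing the vanishing~\eqref{eqn:ext-incomp} established in the proof of Proposition~\ref{prop:full}, and that you attach the $K(C,\cL)$-argument to the case $C' \subset \partial C$ and the $J(C',\cL')$-argument to the case $C \subset \partial C'$ --- which is the correct matching, whereas the paper's own wording appears to transpose these two cases.
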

\begin{proof}
By Corollary~\ref{cor:ic-semis}, we cannot have $C = C'$.  Next, if $C
\not\subset \partial C'$ and $C' \not\subset \partial C$, we established 
the vanishing statement~\eqref{eqn:ext-incomp} in the course of the proof
of Propostion~\ref{prop:full}.  Finally, if $C \subset \partial C'$, then
$\cIC(C',\cL')$ must occur as a direct summand of $K(C,\cL)$ by
Lemma~\ref{lem:ext-std}, and then its skew degree must be $\skdeg
\cIC(C,\cL) - 1$ according to Theorem~\ref{thm:std-struc}.  Similar
reasoning applies if $C' \subset \partial C$.
\end{proof}

Since the category $\cM(C)$ is semisimple, Theorem~\ref{thm:std-struc}
immediately implies the following more general statement about the
functors $\ru j_{C!}$ and $\ru j_{C*}$.

\begin{cor}\label{cor:std-struc}
For any object $\cF \in \cM(C)$, the kernel of the natural morphism $\ru j_{C!}\cF \to \ru j_{C!*}\cF$ is naturally isomorphic to
\[
\bigoplus_{\{(C',\cL') \in \orbvb X \mid C' \subset \partial C\}}
\Ext^1(\ru j_{C!*}\cF, \cIC(C',\cL'))^* \otimes \cIC(C',\cL').
\]
Likewise, the cokernel of the natural morphism $\ru j_{C!*}\cF \to \ru j_{C*}\cF$ is naturally isomorphic to 
\[
\bigoplus_{\{(C',\cL') \in \orbvb X \mid C' \subset \partial C\}}
\Ext^1(\cIC(C',\cL'), \ru j_{C!*}\cF) \otimes \cIC(C',\cL').\rlap{\qed}
\]
\end{cor}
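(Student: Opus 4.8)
The plan is to reduce the statement to Theorem~\ref{thm:std-struc} by exploiting the semisimplicity of $\cM(C)$. First I would invoke Corollary~\ref{cor:ic-semis}: since $\cM(C)$ is semisimple with simple objects the $\cL[d_\cL]$, every $\cF \in \cM(C)$ admits a canonical isotypic decomposition
\[
\cF \simeq \bigoplus_{\cL} \Hom(\cL[d_\cL], \cF) \otimes \cL[d_\cL],
\]
the sum running over irreducible vector bundles $\cL \in \cg C$, with all but finitely many multiplicity spaces $W_\cL := \Hom(\cL[d_\cL],\cF)$ equal to $0$. The functors $\ru j_{C!}$ and $\ru j_{C!*}$ are additive, and the canonical morphism $\ru j_{C!}(-) \to \ru j_{C!*}(-)$ is a natural transformation between them; since $\cF$ is a direct sum of simple objects, forming the kernel commutes with this decomposition. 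The same remarks apply to $\ru j_{C!*}(-) \to \ru j_{C*}(-)$.

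Next I would handle the simple summands. Applying the $t$-exact, fully faithful functor $i_{C*}$ turns $\ru j_{C!}(\cL[d_\cL]) \to \ru j_{C!*}(\cL[d_\cL])$ into $M(C,\cL) \to \cIC(C,\cL)$, whose kernel $K(C,\cL)$ is described by Theorem~\ref{thm:std-struc}. Combining this with the previous paragraph, the kernel of $i_{C*}(\ru j_{C!}\cF \to \ru j_{C!*}\cF)$ becomes
\[
\bigoplus_{\cL} W_\cL \otimes K(C,\cL) \simeq \bigoplus_{\{(C',\cL')\in\orbvb X \,\mid\, C'\subset\partial C\}} \Big( \bigoplus_{\cL} W_\cL \otimes \Ext^1(\cIC(C,\cL), \cIC(C',\cL'))^* \Big) \otimes \cIC(C',\cL').
\]
It then remains to recognize the inner multiplicity space as $\Ext^1(\ru j_{C!*}\cF, \cIC(C',\cL'))^*$. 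For this I would use $i_{C*}\ru j_{C!*}\cF \simeq \bigoplus_\cL W_\cL \otimes \cIC(C,\cL)$, so that additivity of $\Ext^1$ in its first argument gives
\[
\Ext^1(\ru j_{C!*}\cF, \cIC(C',\cL')) \simeq \bigoplus_\cL W_\cL^* \otimes \Ext^1(\cIC(C,\cL), \cIC(C',\cL'));
\]
dualizing this finite-dimensional space recovers exactly $\bigoplus_\cL W_\cL \otimes \Ext^1(\cIC(C,\cL), \cIC(C',\cL'))^*$, as required. The second assertion is entirely parallel: one decomposes the cokernel of $\ru j_{C!*}\cF \to \ru j_{C*}\cF$ through the objects $J(C,\cL)$ of Theorem~\ref{thm:std-struc} and identifies the multiplicity space using additivity of $\Ext^1$ in its second argument. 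Throughout, $\Ext^1$-groups involving $\ru j_{C!*}\cF$ may be computed after applying $i_{C*}$, as justified by Proposition~\ref{prop:full}.

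I do not anticipate a genuine obstacle: the author's remark that the corollary follows \emph{immediately} is borne out, since the semisimplicity of $\cM(C)$ collapses the general case to the simple case already settled in Theorem~\ref{thm:std-struc}. The only point demanding real care is the bookkeeping with duals---one must track that the \emph{first}, contravariant argument of $\Ext^1$ is additive with a dual, whereas the \emph{second}, covariant argument is additive without one, which is precisely why the kernel formula carries a $*$ and the cokernel formula does not. A secondary point is verifying that the isomorphisms are natural in $\cF$; this follows because every ingredient---the isotypic decomposition, the additive functors, and the evaluation pairings defining the multiplicity spaces---is functorial.
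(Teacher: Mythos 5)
Your proposal is correct and follows exactly the route the paper intends: the paper gives no written proof, stating only that semisimplicity of $\cM(C)$ makes the corollary an immediate consequence of Theorem~\ref{thm:std-struc}, and your isotypic decomposition with the careful tracking of duals in the contravariant versus covariant $\Ext^1$ arguments is precisely the bookkeeping being left to the reader.
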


\begin{thm}\label{thm:ext-finite}
For any $(C,\cL) \in \orbvb X$, there are only finitely many pairs
$(C',\cL') \in \orbvb X$ such that either 
\[
\Ext^1(\cIC(C,\cL), \cIC(C',\cL')) \ne 0
\qquad\text{or}\qquad
\Ext^1(\cIC(C',\cL'), \cIC(C,\cL)) \ne 0.
\]
\end{thm}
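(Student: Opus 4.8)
The plan is to split the possible $(C',\cL')$ according to whether or not $C \subset \partial C'$. Lemma~\ref{lem:ext-finite-below} already handles every pair with $C \not\subset \partial C'$: only finitely many such pairs give a nonzero $\Ext^1$ in either direction. This covers both the incomparable case and the case $C' \subset \partial C$ (indeed, $C' \subset \partial C$ forces $\overline{C'} \subset \partial C$, so $C \cap \overline{C'} = \emptyset$, and in particular $C \not\subset \partial C'$). It therefore remains only to bound the pairs with $C \subset \partial C'$.

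For these I would invoke Corollary~\ref{cor:ext1-skdeg}: a nonzero $\Ext^1(\cIC(C,\cL), \cIC(C',\cL'))$ forces $\skdeg \cIC(C',\cL') = \skdeg \cIC(C,\cL) - 1$, while a nonzero $\Ext^1(\cIC(C',\cL'), \cIC(C,\cL))$ forces $\skdeg \cIC(C',\cL') = \skdeg \cIC(C,\cL) + 1$. Writing $w = \skdeg \cIC(C,\cL)$, any contributing $(C',\cL')$ thus satisfies $\skdeg \cIC(C',\cL') \in \{w-1,\,w+1\}$. Since $X$ has only finitely many orbits, there are only finitely many orbits $C'$ with $C \subset \partial C'$, so it suffices to bound, for each fixed such $C'$, the number of admissible $\cL'$. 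Because $\skdeg \cIC(C',\cL') = 2d_{\cL'} - \dim C' = 2\step \cL' - 2r(C') - \dim C'$, the condition $\skdeg \cIC(C',\cL') \in \{w-1, w+1\}$ pins $\step \cL'$ down to at most two explicit integers.

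The one remaining ingredient, which I expect to be the real content, is the finiteness statement that on a fixed orbit $C'$ only finitely many irreducible $G$-equivariant vector bundles have a prescribed value of $\step$; this is a finiteness property built into the (recessed, split) $s$-structure. Granting it, each of the (at most two) admissible values of $\step \cL'$ is realized by only finitely many $\cL'$, so each of the finitely many orbits $C'$ with $C \subset \partial C'$ contributes only finitely many pairs. Combined with Lemma~\ref{lem:ext-finite-below}, this yields the theorem. One can reorganize the last step without changing its essential input: since $C \subset \partial C'$ gives $C' \not\subset \partial C$, Lemma~\ref{lem:ext-std} applied with $(C',\cL')$ in the role of the ``top'' pair identifies the two $\Ext^1$-groups with the multiplicities of $\cIC(C,\cL)$ in the semisimple objects $K(C',\cL')$ and $J(C',\cL')$, which are skew-pure of degrees $\skdeg \cIC(C',\cL') - 1$ and $\skdeg \cIC(C',\cL') + 1$ by Theorem~\ref{thm:std-struc}; this again fixes $\skdeg \cIC(C',\cL')$ and hence $\step \cL'$, leaving the same per-orbit finiteness to be invoked.
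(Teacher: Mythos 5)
Your reduction to the case $C \subset \partial C'$ and your use of Corollary~\ref{cor:ext1-skdeg} to pin down $\skdeg \cIC(C',\cL')$, and hence $\step \cL'$, to at most two values agree with the opening of the paper's proof. But the ingredient you then ask to be granted --- that on a fixed orbit only finitely many irreducible equivariant vector bundles have a prescribed value of $\step$ --- is not a property built into recessed, split $s$-structures, and it is false in general. In the example of Section~\ref{sect:exam}, the $s$-structure on the closed orbit $C_0$ is given by the cocharacter $\chi_0 = (0,1)$, so the infinitely many irreducible bundles $\cO_{C_0}(n,k)$, $n \in \Z$, all have the same step $k$; nothing prevents the same phenomenon from occurring on an orbit $C'$ with $C \subset \partial C'$ in a larger example (it happens whenever the isotropy group has a torus factor of rank at least two). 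So fixing $\step \cL'$ does not cut the candidates down to a finite set, and this is exactly where the real content of the theorem lies.

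The paper closes this gap by a different mechanism: after reducing to $X = \overline{C'}$ via Proposition~\ref{prop:full}, it fixes a locally free resolution $\cP^\bullet$ of $\cIC(C,\cL)$ and shows that any $\cL'$ with $\Ext^1(\cIC(C,\cL),\cIC(C',\cL')) \ne 0$ must occur as a subquotient of the single coherent sheaf $\cP^m|_{C'}$, where $m = d_\cL - \dim C + \dim C' - 1$ is determined by the same skew-degree computation you carried out; since $\cg{C'}$ is a finite-type category, only finitely many irreducible $\cL'$ can arise this way. The opposite $\Ext^1$-group is then handled by Serre--Grothendieck duality, which exchanges the two cases. Your argument needs to be supplemented by something of this kind; as written, its key finiteness assertion is both unproved and untrue.
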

\begin{proof}
Since $X$ contains only finitely many orbits, and in view of Lemma~\ref{lem:ext-finite-below}, it suffices to prove that for a fixed orbit $C'$ with $C \subset \partial C'$, there are only finitely many irreducible vector bundles $\cL' \in \cg{C'}$ such that one of the $\Ext^1$-groups above is nonzero.  Moreover, by Proposition~\ref{prop:full}, those $\Ext^1$-groups may be computed in $\cM(\overline C{}')$ instead.  Thus, we henceforth assume without loss of generality that $X = \overline C{}'$.  Recall that $\cg X$ is assumed to have enough locally free objects.  Let us fix some locally free resolution $\cP^\bullet$ of $\cIC(C,\cL)$.  ($\cP^\bullet$ may be unbounded below, of course.)  Let
\[
m = d_\cL - \dim C + \dim C' - 1.
\]

Suppose now $(C',\cL')$ is such that
\begin{equation}\label{eqn:ext1-fail}
\Ext^1(\cIC(C,\cL), \cIC(C',\cL')) \ne 0.
\end{equation}
Corollary~\ref{cor:ext1-skdeg} tells us that $\skdeg \cIC(C',\cL') = \skdeg \cIC(C,\cL) - 1$, and it follows that $d_{\cL'} = m$.  In particular, $d_{\cL'}$ is independent of $\cL'$.  Now, choose some bounded complex $\cF^\bullet$ of $G$-equivariant coherent sheaves that represents $\cIC(C',\cL')$.  Then any nonzero class $f \in \Ext^1(\cIC(C,\cL), \cIC(C',\cL'))$ may represented by morphism of chain complexes $\tilde f: \cP^\bullet[-1] \to \cF^\bullet$.  Let $\cG^\bullet$ denote the image of $\tilde f$.

We claim that $h^m(\cG^\bullet)|_{C'} \simeq \cL'$.  Indeed, we clearly have $h^i(\cG^\bullet) \subset h^i(\cIC(C',\cL'))$ for all $i$, and in particular, $h^i(\cG^\bullet)|_{C'} = 0$ for $i \ne m$.  From the characterization of $\flatru \Dl X0$ in terms of cohomology sheaves in~\cite[Section~2]{at:bs}, it follows that the complex $\cG^\bullet$, regarded as an object of $\Db X$, also belongs to $\flatru \Dl X0$.  Moreover, if $h^i(\cG^\bullet)$ is supported on $\partial C'$ for all $i$, then we actually have $\cG^\bullet \in \ru \Dl X{-1}$.  But that is impossible: the natural map $\cG^\bullet \to \cIC(C',\cL')$ is nonzero (because $f \ne 0$), but there is no nonzero morphism from an object of $\ru \Dl X{-1}$ to an object of $\cM(X)$.  We noted earlier that $h^i(\cG^\bullet)$ is supported on $\partial C'$ for $i \ne m$, so we must have $h^m(\cG^\bullet)|_{C'} \ne 0$.  Indeed,
\[
h^m(\cG^\bullet)|_{C'} \subset h^m(\cIC(C',\cL'))|_{C'} \simeq \cL',
\]
and since $\cL'$ is irreducible, the containment is an equality.  Finally, note that $h^m(\cG^\bullet)|_{C'}$ is a subquotient of $\cP^m|_{C'}$. 

We have shown that the condition~\eqref{eqn:ext1-fail} implies that $\cL'$ is a subquotient of $\cP^m|_{C'}$.  Since $\cg{C'}$ is a finite-type category, there are, up to isomorphism, only finitely many possible irreducible vector bundles $\cL'$ for which~\eqref{eqn:ext1-fail} may hold, as desired.

It remains to consider $\cL'$ such that $\Ext^1(\cIC(C',\cL'), \cIC(C,\cL)) \ne 0$.  Unusually for this paper, the proof is not parallel to the case considered above.  Instead, we use the Serre--Grothendieck duality functor, which we denote $\D: \Db X \to \Db X$.  By~\cite[Theorem~8.6]{at:bs}, $\D$ carries the staggered $t$-structure to another staggered $t$-structure (that is, with respect to another perversity).  Since
\[
\Ext^1(\cIC(C',\cL'), \cIC(C,\cL)) \simeq \Ext^1(\D\cIC(C,\cL), \D\cIC(C',\cL')),
\]
the desired result follows from the case~\eqref{eqn:ext1-fail} in the dual category $\D\cM(X)$.
\end{proof}

\section{Projective and Injective Objects}
\label{sect:extfin}

In this section, we prove the main result of the paper:

\begin{thm}\label{thm:projinj}
Every simple object in $\cM(X)$ admits a canonical projective cover and a canonical injective hull.
\end{thm}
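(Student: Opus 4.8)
The plan is to construct projective covers by building, for each simple object $\cIC(C,\cL)$, an object that ``absorbs'' all possible extensions, using the standard objects $M(C,\cL)$ as building blocks. The key enabling fact is Theorem~\ref{thm:ext-finite}, which guarantees that each simple object has nonzero $\Ext^1$ with only finitely many others; this finiteness is what will make the iterative construction terminate and produce an object of finite length.

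**First I would** recall the abstract strategy for producing projectives in a quasi-hereditary category of finite type. In such a category, a projective cover $P(C,\cL)$ of $\cIC(C,\cL)$—if it exists—should admit a standard filtration, with $M(C,\cL)$ as its top standard quotient and with lower standard objects $M(C',\cL')$ (for $(C',\cL') \succ (C,\cL)$) appearing as subquotients. The multiplicities are governed by Brauer--Humphreys reciprocity~\eqref{eqn:recip}. Concretely, I would build $P(C,\cL)$ by successive extensions: start with $M(C,\cL)$, and whenever there is a nonzero $\Ext^1(\cIC(C',\cL'), \cIC(C,\cL))$ that is not yet ``killed,'' glue on a copy of the standard object $M(C',\cL')$ to absorb it. By Theorem~\ref{thm:ext-finite} there are only finitely many such $(C',\cL')$ at each stage, and each has strictly larger closure-dimension (by Corollary~\ref{cor:ext1-skdeg}, $C \subset \partial C'$, so $\dim C' > \dim C$), so the process must terminate after finitely many steps. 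This yields an object $P(C,\cL)$ of finite length with $[P(C,\cL) : \cIC(C,\cL)] = 1$.

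**The main obstacle** will be verifying that the object so constructed is genuinely projective in $\cM(X)$, i.e., that $\Ext^1(P(C,\cL), -) = 0$ on all of $\cM(X)$, not merely on the finitely many simples used in the construction. To handle this I would argue by induction on the length of an arbitrary target object $Y$, reducing via the long exact sequence for $\Ext$ to the case where $Y$ is simple; the construction is designed precisely so that $\Ext^1(P(C,\cL), \cIC(C',\cL')) = 0$ for every simple $\cIC(C',\cL')$. The delicate point is that adding a standard object $M(C',\cL')$ to absorb one extension may \emph{create} new extensions with still-higher simples, so one must check that the induction genuinely decreases a well-founded quantity (the partial order by $\dim \overline C$, which is bounded above since $X$ has finitely many orbits). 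Once projectivity is established, $P(C,\cL) \to \cIC(C,\cL)$ is a projective cover because $[P(C,\cL):\cIC(C,\cL)] = 1$ forces the cover property via Lemma~\ref{lem:proj-iso}.

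**For injective hulls** I would invoke the Serre--Grothendieck duality functor $\D$, exactly as in the proof of Theorem~\ref{thm:ext-finite}: since $\D$ interchanges the staggered $t$-structure with another staggered $t$-structure and is a contravariant equivalence, it exchanges projectives and injectives. Thus the existence of canonical projective covers in $\cM(X)$ (and in every $\D\cM(X)$) immediately yields canonical injective hulls, $I(C,\cL) = \D P'(C,\cL)$ for the appropriate dual datum. Canonicity follows from Lemma~\ref{lem:proj-iso} together with the multiplicity-one property $[P(C,\cL):\cIC(C,\cL)] = 1$.
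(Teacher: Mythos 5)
Your overall strategy---universal extensions plus the finiteness of Theorem~\ref{thm:ext-finite}---is in the right family, but the proposal as written has a genuine gap, and its guiding picture is one the paper explicitly disavows. The remark following Theorem~\ref{thm:projinj} states that projective covers in $\cM(X)$ need \emph{not} admit standard filtrations and that the numbers $\la P(t):M(s)\ra$ need not be multiplicities; this already fails for the $\bG_m$-action on $\bA^1$. So the premise that $P(C,\cL)$ ``should'' be assembled from standard objects with multiplicities given by~\eqref{eqn:recip} is false, and the construction built on it inherits the problem. Concretely: if you glue on a copy of $M(C',\cL')$ to absorb a class in $\Ext^1(P_i,\cIC(C',\cL'))$, you need that class to lift along $\Ext^1(P_i, M(C',\cL'))\to\Ext^1(P_i,\cIC(C',\cL'))$; the obstruction lives in $\Ext^2(P_i, K(C',\cL'))$, and this is exactly the $\Ext^2$-vanishing hypothesis of~\cite{bgs} that the paper says fails here. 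If you instead glue on the simple $\cIC(C',\cL')$ itself (the usual universal extension), the killing step works, but your termination argument collapses: Corollary~\ref{cor:ext1-skdeg} allows \emph{either} $C'\subset\partial C''$ \emph{or} $C''\subset\partial C'$, so the orbits of newly created extension targets do not increase monotonically in dimension; the quantity that does decrease monotonically is the skew degree, which is unbounded below over the infinitely many simples on a fixed orbit, so no well-founded descent is established. You assert ``the process must terminate'' but the invariant you name does not actually decrease.

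The paper's proof is engineered precisely to avoid both horns of this dilemma. It inducts on the number of orbits (not on a poset of simples), takes the projective cover $P_Z$ on the closed complement $Z$ of an open orbit $C_0$ as given, and performs exactly \emph{two} universal extensions: first by simples on $C_0$ (producing $Q$), then by simples on $Z$ (producing $P$). Projectivity of $P$ is then not automatic; it is verified by an explicit computation using the structure theorem for $M(C_0,\cL)$ (Corollary~\ref{cor:std-struc}) to show that the map $\Ext^1(D,\cIC(C',\cL'))\to\Ext^2(P_Z,\cIC(C',\cL'))$ of~\eqref{eqn:dext-p} is injective---i.e., the $\Ext^2$ obstruction is controlled rather than assumed to vanish. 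That analysis is the heart of the proof and is entirely absent from your sketch. Your suggestion to obtain injective hulls by Serre--Grothendieck duality is plausible (the paper instead runs the dual construction), and your use of Lemma~\ref{lem:proj-iso} for canonicity is correct, but the existence argument for $P(C,\cL)$ needs to be replaced.
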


\begin{rmk}
For the reader familiar with~\cite[Theorem~3.2.1]{bgs}, on whose proof the
argument below is based, we briefly indicate the relationship between the
two.  That theorem states that in any quasi-hereditary category with
finitely many isomorphism classes of simple objects and a certain
$\Ext^2$-vanishing condition on standard and costandard objects, every
simple object has a projective cover and an injective hull.  
Both assumptions are false in general for staggered sheaves, so we cannot
simply invoke that theorem.  However, the fact that $\cM(X)$ usually has
infinitely many isomorphism classes of simple objects is not a major
obstacle: we proceed by induction on the number of orbits instead of on the
number of simple objects, making use of Theorems~\ref{thm:serre} and~\ref{thm:ext-finite} to handle infinitely many simple objects
at each step.  The failure of the $\Ext^2$-vanishing hypothesis is more
serious: getting around this requires a delicate argument using the
explicit structure theorem from Section~\ref{sect:struc} to gain control over what happens in the relevant $\Ext^2$-groups.  It is this aspect of the proof that causes it to be so lengthy.

In~\cite{bgs}, the $\Ext^2$-vanishing condition is also
used to show that projective covers admit standard filtrations, and
hence to deduce the reciprocity formula~\eqref{eqn:recip}.  However, these properties
need not hold for $\cM(X)$ (except in the weak sense of
Proposition~\ref{prop:std-filt}).  Indeed, they fail even in the example of
the $\bG_m$-action on $\bA^1$ considered in~\cite[Section~11.2]{a}.
\end{rmk}

\begin{rmk}
In the remarks following~\cite[Theorem~3.2.1]{bgs}, the authors sketch a different and much shorter existence proof for projective covers, due to Ringel.  That argument (which also assumes finitely many isomorphism classes of simple objects) could likely also be modified to work for $\cM(X)$.  However, the proof below has the advantage of giving a rather explicit description of those objects.
\end{rmk}

To prove Theorem~\ref{thm:projinj}, we proceed by induction on the number of orbits in $X$.  Choose an open orbit $C_0 \subset X$, and let $Z = X \ssm C_0$. 
Given a pair $(C,\cL) \in \orbvb X$, we will show that $\cIC(C,\cL)$ admits a projective cover.  The construction of injective hulls is parallel and will be omitted.  If $C = C_0$, then clearly $M(C,\cL)$ is the desired projective cover.  Assume henceforth that $C \subset Z$.  The treatment of this case occupies the remainder of the section.

\subsection{Definition of auxiliary objects}

In this step, we define eight new objects, arranged in seven interlocking short exact sequences.  A summary diagram is shown in Figure~\ref{fig:ses}.

\begin{figure}
\[
\xymatrix@=3pt{
&&&& R \ar[dddrr] && R \ar[dddrr]\ar[ddrrrr]
 &&&&&&&& P_Z \\
\\
&&&& && &&&& P \ar[uurrrr]\ar[drr] \\
&& K \ar[uuurr]\ar[drr]
 &&&& D \ar[dddrr] && D \ar[urr]\ar[dddrr]
 &&&& Q \ar[uuurr] \\
&&&& M \ar[urr]\ar[ddrrrr] \\
\\
J \ar[uuurr]\ar[uurrrr]
 &&&&&&&& S && S \ar[uuurr]}
\]
\caption{In this diagram, every collinear sequence of three objects is a short exact sequence.\label{fig:ses}}
\end{figure}
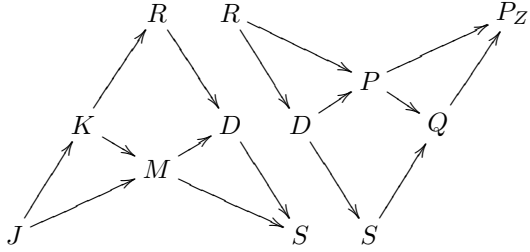

By assumption, within the category $\cM(Z)$, the object
$\cIC(C,\cL)$ has a projective cover, which we denote $P_Z$.  For each
irreducible vector bundle $\cL \in \cg {C_0}$, let
\[
B_\cL = \Ext^1(P_Z, \cIC(C_0,\cL)).
\]
It follows from Theorem~\ref{thm:ext-finite} that $B_\cL = 0$ for all but finitely many $\cL$ up to
isomorphism.  Thus, we may form the direct sum
\[
S = \bigoplus B_\cL^* \otimes \cIC(C_0,\cL).
\]
Note that for any irreducible vector bundle $\cL \in \cg {C_0}$, we have
\begin{multline}\label{eqn:shom}
\Hom(S, \cIC(C_0,\cL)) \simeq \Hom(B_\cL^* \otimes \cIC(C_0,\cL),
\cIC(C_0,\cL)) \\
\simeq \Hom(B_\cL^*, \Bbbk) \simeq B_\cL.
\end{multline}
This observation will be used later.  Consider now the sequence of isomorphisms
\[
\Ext^1(P_Z,S) \simeq \bigoplus B_\cL^* \otimes \Ext^1(P_Z, \cIC(C,\cL))
\simeq \bigoplus \End(B_\cL).
\]
Thus, $\Ext^1(P_Z,S)$ contains a canonical element $\alpha$, corresponding
to the identity operator in $\bigoplus \End(B_\cL)$.  We define an object
$Q$ by forming the short exact sequence corresponding to $\alpha$:
\begin{equation}\label{eqn:pses1}
0 \to S \to Q \to P_Z \to 0.
\end{equation}

Since $P_Z$ is projective as an object of subcategory $\cM(Z)$, and since $\cM(Z)$ is Serre by Theorem~\ref{thm:serre}, it follows
that even in the larger category $\cM(X)$, we have
\begin{equation}\label{eqn:pzext-p}
\Ext^1(P_Z, \cIC(C',\cL')) = 0 \qquad\text{when $C' \subset Z$.}  
\end{equation}
Thus, whenever $C' \subset Z$, we obtain from~\eqref{eqn:pses1} an exact
sequence
\begin{equation}\label{eqn:ples1}
0 \to \Ext^1(Q,\cIC(C',\cL')) \to \Ext^1(S,\cIC(C',\cL')) \to
\Ext^2(P_Z,\cIC(C',\cL')). 
\end{equation}
Let us put
\[
E_{C',\cL'} = \Ext^1(Q,\cIC(C',\cL')),
\qquad\text{and}\qquad
F_{C',\cL'} = \Ext^1(S, \cIC(C',\cL')).
\]
Next, we denote the cokernel of the inclusion $E_{C',\cL'} \hto
F_{C',\cL'}$ by
\begin{equation}\label{eqn:barf-inc}
\bar F_{C',\cL'} = F_{C',\cL'}/E_{C',\cL'} \subset
\Ext^2(P_Z,\cIC(C',\cL')).
\end{equation}

We now define a new object $R$ by a construction analogous to that of $S$:
\[
R = \bigoplus E_{C',\cL'}^* \otimes \cIC(C',\cL').
\]
As we saw with $\Ext^1(P_Z,S)$, the group $\Ext^1(Q,R) \simeq \bigoplus \End(E_{C',\cL'})$ contains a
canonical element $\beta$.  Let
\[
 0 \to R \to P \to Q \to 0
\]
be the corresponding short exact sequence.  Now, $S$ can be identified
with a subobject of $Q$.  Let $D$ be its preimage in $P$.  We thus obtain
an additional short exact sequence
\begin{equation}\label{eqn:pses3}
 0 \to R \to D \to S \to 0.
\end{equation}
The class $\gamma$ of this extension in $\Ext^1(S,R)$ can be described as
follows.  As in~\eqref{eqn:ples1}, the long exact sequence formed
from~\eqref{eqn:pses1} gives us an injective map $\Ext^1(Q,R) \hto
\Ext^1(S,R)$.  The element $\gamma$ is simply the image of $\beta$ under
this inclusion.  For an alternate description, note that
\[
\Ext^1(S,R) \simeq \bigoplus E_{C',\cL'}^* \otimes \Ext^1(S,\cIC(C',\cL'))
\simeq \bigoplus \Hom(E_{C',\cL'}, F_{C',\cL'}).
\]
Then $\beta$ corresponds to the inclusions $E_{C',\cL'} \to F_{C',\cL'}$.

It is clear that $P/D \simeq Q/S \simeq P_Z$, so we also have
\begin{equation}\label{eqn:pses4}
 0 \to D \to P \to P_Z \to 0.
\end{equation}

Next, consider the objects
\[
 M = \bigoplus B_\cL^* \otimes M(C_0,\cL)
\qquad\text{and}\qquad
K = \bigoplus F_{C',\cL'}^* \otimes \cIC(C',\cL').
\]
We of course have a surjective map $M \to S$.  According to
Corollary~\ref{cor:std-struc}, its kernel is $K$, so we have a short exact
sequence
\begin{equation}\label{eqn:pses5}
0 \to K \to M \to S \to 0.
\end{equation}
For each pair $(C',\cL')$, there is a natural surjective map
$F_{C',\cL'}^* \to E_{C',\cL'}^*$ with kernel $\bar F_{C',\cL'}^*$. 
Together, they give rise to a surjective map $K \to R$ with kernel
\[
J = \bigoplus \bar F_{C',\cL'}^* \otimes \cIC(C',\cL').
\]

To complete the picture in Figure~\ref{fig:ses}, it remains to show the existence of a
short exact sequence
\begin{equation}\label{eqn:pses7}
 0 \to J \to M \to D \to 0
\end{equation}
making the diagram commute.  There is a natural isomorphism $\Ext^1(S,K)
\simeq \End(F_{C',\cL'})$, and the canonical element $\delta \in
\Ext^1(S,K)$ corresponds to the extension~\eqref{eqn:pses5}.  Consider the following commutative diagram:
\[
\xymatrix@=12pt{
\Ext^1(S,K) \ar[r]\ar@{=}[d] & \Ext^1(S,R)\ar@{=}[d] & \Ext^1(Q,R)
\ar[l]\ar@{=}[d] \\
\End(F_{C',\cL'}) \ar[r] & \Hom(E_{C',\cL'},F_{C',\cL'}) &
\End(E_{C',\cL'}) \ar[l]}
\]
The image of $\delta$ in $\Ext^1(S,R)$ is clearly $\gamma$.  It follows
that the sequences~\eqref{eqn:pses3} and~\eqref{eqn:pses5} are related by
a commutative diagram as shown in Figure~\ref{fig:ses}, and, moreover, that the kernel
of the surjective morphism $M \to D$ coincides with that of $K \to R$, as
desired.

\subsection{Properties of $D$}

Given a pair $(C',\cL') \in \orbvb X$, form the exact sequence
\begin{multline}\label{eqn:ples7}
 0 \to \Hom(D, \cIC(C',\cL')) \to \Hom(M, \cIC(C',\cL')) \\
\to \Hom(J, \cIC(C',\cL')) \to \Ext^1(D,\cIC(C',\cL')) \to 0.
\end{multline}
Here, we have used the fact that $M$ is projective, and hence $\Ext^1(M,
\cIC(C',\cL')) = 0$.  Now, if $C' \subset Z$, then the second term
above vanishes, and it follows that
\begin{equation}\label{eqn:dhom-p}
\Hom(D,\cIC(C',\cL')) = 0
\end{equation}
and that
\[
\Ext^1(D,\cIC(C',\cL')) \simeq \Hom(J,\cIC(C',\cL')) \simeq \bar F_{C',\cL'}.
\]
In particular, the natural morphism $\Ext^1(D,\cIC(C',\cL')) \to \Ext^2(P_Z,\cIC(C',\cL'))$ in the long exact sequence for~\eqref{eqn:pses4} can be identified with the inclusion in~\eqref{eqn:barf-inc}.  That is, the morphism
\begin{equation}\label{eqn:dext-p}
\Ext^1(D,\cIC(C',\cL')) \hto \Ext^2(P_Z,\cIC(C',\cL'))
\end{equation}
is injective.
Next, consider the sequence~\eqref{eqn:ples7} in the case where $C' = C_0$.  In this case, the third term vanishes, so 
\begin{equation}\label{eqn:dext-b}
\Ext^1(D,\cIC(C_0,\cL')) = 0,
\end{equation}
and, moreover, the first two terms are isomorphic.  Since
$\Hom(K,\cIC(C_0,\cL')) = 0$, it follows from the long exact
sequence for~\eqref{eqn:pses5} that
\begin{multline}\label{eqn:dhom-b}
\Hom(D,\cIC(C_0,\cL')) \simeq \Hom(M,\cIC(C_0,\cL')) \\
\simeq \Hom(S,\cIC(C_0,\cL')) \simeq \Ext^1(P_Z,\cIC(C_0,\cL')),
\end{multline}
where the last isomorphism is from~\eqref{eqn:shom}.

\subsection{Conclusion of the proof}

Given $(C',\cL') \in \orbvb X$, form the sequence
\begin{multline*}
0 \to \Hom(P_Z,\cIC(C',\cL')) \to \Hom(P,\cIC(C',\cL')) \to \\
\Hom(D,\cIC(C',\cL')) \to \Ext^1(P_Z,\cIC(C',\cL'))
\to \Ext^1(P,\cIC(C',\cL')) \\
\to \Ext^1(D, \cIC(C',\cL')) \to \Ext^2(P_Z,\cIC(C',\cL')) \to \cdots.
\end{multline*}
Depending on whether $C' \subset Z$ or $C' = C_0$, we use either~\eqref{eqn:dhom-p} or~\eqref{eqn:dhom-b} to see that the first two terms are isomorphic:
\[
\Hom(P,\cIC(C',\cL')) \simeq \Hom(P_Z, \cIC(C',\cL')) \simeq
\begin{cases}
\Bbbk & \text{if $(C',\cL') = (C,\cL)$,} \\
0 & \text{otherwise.}
\end{cases}
\]
In other words, $\cIC(C,\cL)$ is the unique simple quotient of $P$.  A purity argument as in the proof of Theorem~\ref{thm:std-struc} then shows that $[P:\cIC(C,\cL)] = 1$.  Next, we check that $P$ is projective: if $C' \subset Z$, we see from~\eqref{eqn:pzext-p} and~\eqref{eqn:dext-p} that
\[
\Ext^1(P,\cIC(C',\cL')) = 0.
\]
If $C' = C_0$, the same result follows from~\eqref{eqn:dext-b} and~\eqref{eqn:dhom-b}.  Thus, $P$ is a projective cover of $\cIC(C,\cL)$, and it is unique up to canonical isomorphism by Lemma~\ref{lem:proj-iso}.

\section{An Example}
\label{sect:exam}

Let $G = (\bG_m)^2$ act on $\bA^2$ with weights $\pi_+ = (2,1)$ and $\pi_- = (-2,1)$.  That is, $G$ acts on $\bA^2$ by
\[
(z,t) \cdot \begin{bmatrix} u \\ v \end{bmatrix}
= \begin{bmatrix} z^2tu \\ z^{-2}tv \end{bmatrix}.
\]
Let $X = \Spec \Bbbk[x,y]/(xy)$; this is the union of the two coordinate axes in $\bA^2$.  Clearly, $G$ has three orbits on $X$, which we denote $C_+ = \Spec \Bbbk[x,x^{-1}]$, $C_- = \Spec \Bbbk[y,y^{-1}]$, and $C_0 = \Spec \Bbbk$.  The $G$-stabilizer of any point in $C_+$ is $H_+ = \ker \pi_+$, and the $G$-stabilizer of any point in $C_-$ is $H_- = \ker \pi_-$.  Note that $H_+$ is precisely the image of the cocharacter $\chi_+ = (-1,2)$ (that is, the map $\chi_+: \bG_m \to G$ given by $s \mapsto (s^{-1},s^2)$), and $H_-$ is the image of $\chi_- = (1,2)$.  The weight lattice of $H_+$ or $H_-$ can be identified with $\Z$, and the restriction of any $G$-weight $\lambda$ to $H_+$ or $H_-$ is given by $\la \chi_+,\lambda\ra$ or $\la\chi_-,\lambda\ra$, respectively.

\begin{rmk}
This example is related to the Lie group $SL(2,\R)$ in the following way.  Let $K$ denote a maximal compact subgroup of $SL(2,\R)$, and let $\fk \subset \fsl(2,\R)$ denote its Lie algebra.  Its complexification $K_\C \simeq \C^\times$ acts on the complex vector space $(\fsl(2,\R)/\fk)^* \otimes \C$ with weights $2$ and $-2$.  The variety $\cN$ of nilpotent elements in that space is the union of the two weight spaces.  Now, let us extend this $K_\C$-action to a $(K_C \times \C^\times)$-action by having the second factor act with weight $1$ on the whole vector space.  Then, the $(K_\C \times \C^\times)$-action on $\cN$ is isomorphic to the $G$-action on $X$ described above.
\end{rmk}

For any $G$-stable closed subscheme $Y \subset X$ and any $G$-weight $\lambda$, we write $\cO_Y(\lambda)$ for the $G$-equivariant sheaf obtained by twisting the structure sheaf of $Y$ by $\lambda$.  Next, note that irreducible line bundles on $C_+$ or on $C_-$ are indexed by $\Z$, {\it i.e.}, by characters of $H_+$ or $H_-$.  We denote these line bundles by $\cL_+(n)$ and $\cL_-(n)$.

Let $i: C_0 \hto X$ be the inclusion morphism.  It is easy to check that
\begin{equation}\label{eqn:oc-res}
\begin{aligned}
Li^*\cO_{\overline C_+}(\lambda) &\simeq \cO_{C_0}(\lambda), \\
Ri^!\cO_{\overline C_+}(\lambda) &\simeq \cO_{C_0}(\lambda+\pi_+)[-1],
\end{aligned}
\qquad\qquad
\cO_{\overline C_+}(\lambda)|_{C_+} \simeq \cL_+(\la \chi_+,\lambda\ra).
\end{equation}
Analogous results hold for $\cO_{\overline C_-}(\lambda)$.

Let us endow $X$ with an $s$-structure.  By~\cite[Theorem~7.4]{at:pd}, we may specify an
$s$-structure on an orbit by giving a cocharacter of its isotropy group. 
We give $C_+$ the $s$-structure corresponding to $\chi_+$ (which may, of
course, be regarded as a cocharacter of the isotropy group $H_+$), and we
give $C_-$ the $s$-structure corresponding to $\chi_-$.  For $C_0$, we use
the cocharacter $\chi_0 = (0,1)$.

To combine these into an $s$-structure on all of $X$, we will use the
gluing theorem~\cite[Theorem~1.1]{as:flag}.  That theorem requires us to
check that the conormal bundle of any orbit $C$ lies in $\cgl C{-1}$.  This
condition holds trivially for the open orbits $C_+$ and $C_-$.  For $C_0$,
the conormal bundle is $\cO_{C_0}(-\pi_+) \oplus \cO_{C_0}(-\pi_-)$.  We
have $\la\chi_0,-\pi_\pm\ra \le -1$, as required.  We thus obtain an
$s$-structure on $X$.  It is automatically recessed, and it is split
by~\cite[Theorem~7.6]{at:pd}, as required.

Let $r: \orb X \to \Z$ denote the constant perversity $r(C) = 0$.  We now
determine the simple staggered sheaves with respect to this perversity. 
For brevity, we adopt the
notation
\[
\begin{aligned}
 L_+(n) &= \cIC(\overline C_+, \cL_+(n)), \\
 L_-(n) &= \cIC(\overline C_-, \cL_-(n)),
\end{aligned}
\qquad\qquad
 L_0(n,k) = \cIC(C_0, \cO_{C_0}(n,k)).
\]
The three families of simple objects can be explicitly described as
follows:
\[
\begin{aligned}
L_+(n) &= \cO_{\overline C_+}(n-2,n-1)[n], \\
L_-(n) &= \cO_{\overline C_-}(-n+2,n-1)[n],
\end{aligned}
\qquad\qquad
L_0(n,k) = \cO_{C_0}(n,k)[k].
\]
(These assertions are easily verified using~\eqref{eqn:oc-res}.)  Next, we turn to standard and costandard objects.  It turns that every simple object is also standard, and of course the $L_0(n,k)$ are also costandard.  The nontrivial costandard objects are
\[
N_+(n) = \cO_{\overline C_+}(n,n)[n]
\qquad\text{and}\qquad
N_-(n) = \cO_{\overline C_-}(-n,n)[n].
\]
These objects fit into short exact sequences as follows:
\begin{equation}\label{eqn:oc-ses}
\begin{gathered}
0 \to L_+(n) \to N_+(n) \to L_0(n,n) \to 0 \\
0 \to L_-(n) \to N_-(n) \to L_0(-n,n) \to 0
\end{gathered}
\end{equation}

Finally, we turn to projective covers and injective hulls.  The objects $L_\pm(n)$ are already projective, and their injective hulls are simply their costandard hulls, the $N_\pm(n)$.  On the other hand, the objects $L_0(n,k)$ are already injective, and they are also projective except when $k = |n|$.  In that case, the nonsplit sequences~\eqref{eqn:oc-ses} show that they are not projective.  In fact, when $n \ne 0$, we can read off the projective cover $P_0(\pm n,n)$ of $L_0(\pm n,n)$ from~\eqref{eqn:oc-ses}:
\[
P_0(n,n) = N_+(n)
\qquad\text{and}\qquad
P_0(-n,n) = N_-(n)\qquad\text{if $n \ne 0$.}
\]
In the special case $n = 0$, the sequences~\eqref{eqn:oc-ses} give two distinct nontrivial extensions of $L_0(0,0)$.  The projective cover of this object is $P_0(0,0) \simeq \cO_X$, and we have a short exact sequence
\[
0 \to L_+(0) \oplus L_-(0) \to P_0(0,0) \to L_0(0,0) \to 0.
\]

Because every standard object in this example happens to be simple, it is
obviously true that projective covers of simple objects have
standard filtrations.  As we saw in Section~\ref{sect:abcat}, the
multiplicities of standard objects in projective covers obey the
reciprocity formula~\eqref{eqn:recip}.


\end{document}